\renewcommand{\paragraph}{\roman{paragraph}}
\newtheorem{theorem}{Theorem}[section]
\newtheorem{conjecture}[theorem]{Conjecture}
\newtheorem{lemma}[theorem]{Lemma}
\newtheorem{Remark}[theorem]{Remark}
\renewcommand{\thefootnote}{\arabic{footnote}}
\begin{document}

\title{Bollob\'as-type theorems for hemi-bundled two families}
\author{\renewcommand{\thefootnote}{\arabic{footnote}} Wenjun Yu$^1$,~~Xiangliang Kong$^2$,~~Yuanxiao Xi$^3$,~~Xiande Zhang$^1$,~~Gennian Ge$^2$}

\date{}
\maketitle
\footnotetext[1]{W. Yu ({\tt yuwenjun@mail.ustc.edu.cn}) and X. Zhang ({\tt drzhangx@ustc.edu.cn}) are with School of Mathematical Sciences,
University of Science and Technology of China, Hefei, 230026, Anhui, China. The research of X. Zhang was supported by the National Natural Science Foundation of China under grants No. 11771419 and No. 12171452.}
\footnotetext[2]{X. Kong ({\tt 2160501011@cnu.edu.cn}) and G. Ge ({\tt gnge@zju.edu.cn}) are with the School of Mathematical Sciences, Capital Normal University, Beijing 100048, China. The research of G. Ge was supported by the National Natural Science Foundation of China under Grant No. 11971325, National Key Research and Development Program of China under Grant Nos. 2020YFA0712100 and 2018YFA0704703, and Beijing Scholars Program.}
\footnotetext[3]{Y. Xi ({\tt yuanxiao\_xi@zju.edu.cn}) is with the School of Mathematical Sciences, Zhejiang University, Hangzhou 310027, Zhejiang, China.}

\begin{abstract}
  Let $\{(A_i,B_i)\}_{i=1}^{m}$ be a collection of pairs of sets with $|A_i|=a$ and $|B_i|=b$ for $1\leq i\leq m$. Suppose that $A_i\cap B_j=\emptyset$ if and only if $i=j$, then by the famous Two Families Theorem of Bollob\'{a}s, we have the size of this collection $m\leq {a+b\choose a}$. In this paper, we consider a variant of this problem by requiring $\{A_i\}_{i=1}^{m}$ to be intersecting additionally. Using exterior algebra method, we prove a weighted Bollob\'{a}s-type theorem for finite dimensional real vector spaces under these constraints. As a consequence, we obtain a similar theorem for finite sets, which settles a recent conjecture of Gerbner \emph{et al.}~\cite{GKMNPTX2019}. Moreover, we also determine the unique extremal structure of $\{(A_i,B_i)\}_{i=1}^{m}$ for the primary case of the theorem for finite sets.
\end{abstract}

\section{Introduction}

In 1965, Bollob\'{a}s~\cite{Bollobas1965} proved the following theorem about cross-intersecting set pairs, which became one of the cornerstones in extremal set theory.

\begin{theorem}[Bollob\'{a}s' Theorem]\label{ori_Bollobas}\cite{Bollobas1965}
Let $(A_1,B_1),\ldots,(A_m,B_m)$ be pairs of sets with $|A_i|=a$ and $|B_i|=b$ for every $1\leq i\leq m$. Suppose that
\begin{itemize}
  \item $A_i\cap B_i=\emptyset$, for $1\leq i\leq m$, and
  \item $A_i\cap B_j\neq \emptyset$, for $i\neq j$.
\end{itemize}
Then
\begin{equation}\label{ori_ineq}
m\leq {{a+b}\choose a}.
\end{equation}
Furthermore, equality holds if and only if there is some set $X$ of cardinality $a+b$ such that $A_i$s are all subsets of $X$ of size $a$, and $B_i=X\setminus A_i$ for each $i$.
\end{theorem}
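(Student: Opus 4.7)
The plan is to apply Katona's probabilistic (permutation) method, which gives the cleanest route to the inequality. Let $V = \bigcup_{i=1}^{m}(A_i \cup B_i)$ denote the underlying ground set and let $\pi$ be a uniformly random linear order on $V$. For each $i$, I would introduce the event
\[
E_i \;=\; \bigl\{\pi : \text{every element of } A_i \text{ precedes every element of } B_i \text{ in } \pi\bigr\}.
\]
Since $A_i \cap B_i = \emptyset$, the $a+b$ positions in $\pi$ occupied by $A_i \cup B_i$ are filled by a uniformly random arrangement, and $A_i$ can land on any of the $\binom{a+b}{a}$ equally likely subsets of positions; exactly one of these subsets places $A_i$ entirely before $B_i$, so $\Pr[E_i] = \binom{a+b}{a}^{-1}$.

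Next I would show that the events $E_i$ are pairwise disjoint. Suppose $E_i$ and $E_j$ both hold for some $i \neq j$, and let $p_i$ (respectively $p_j$) denote the position in $\pi$ of the last element of $A_i$ (respectively $A_j$); assume without loss of generality $p_i \leq p_j$. The cross-intersection hypothesis produces some $x \in A_i \cap B_j$. Then $x \in A_i$ forces the position of $x$ to be at most $p_i \leq p_j$, while $x \in B_j$ combined with $E_j$ forces its position to exceed $p_j$, a contradiction. Hence the union bound yields
\[
1 \;\geq\; \Pr\Bigl[\bigcup_{i=1}^{m} E_i\Bigr] \;=\; \sum_{i=1}^{m} \Pr[E_i] \;=\; \frac{m}{\binom{a+b}{a}},
\]
which rearranges to the claimed bound $m \leq \binom{a+b}{a}$.

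For the extremal characterization, suppose $m = \binom{a+b}{a}$; then the disjoint events $E_i$ exhaust the probability space, so almost every permutation realizes exactly one $E_i$. I expect this to be the main obstacle: the inequality itself is a crisp two-step probabilistic argument, but identifying all extremal configurations requires more delicate work. I would first argue that $|V|=a+b$ by analyzing the sensitivity of the index map $\pi \mapsto i(\pi)$: any element $v$ lying outside some pair $A_{i_0}\cup B_{i_0}$ can be slid across the $A_{i_0}/B_{i_0}$ interface in $\pi$, and tracking how $i(\pi)$ is forced to change leads to a contradiction with the partition property. Once $|V|=a+b$ is pinned down, each $A_i \cup B_i$ must equal $V$, so $B_i = V\setminus A_i$ and the $m$ distinct sets $A_i$ exhaust all $a$-subsets of $V$, giving the claimed unique structure with $X=V$. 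A cleaner alternative I would pursue in parallel is induction on $|V|$ combined with an exchange/compression argument: in a minimal extremal counterexample with $|V|>a+b$, show that some element can be identified with another (or deleted) while preserving all hypotheses and the value $m$, contradicting minimality.
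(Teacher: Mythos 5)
The paper does not actually prove Theorem~\ref{ori_Bollobas}; it is quoted from \cite{Bollobas1965} without proof, so your argument has to stand on its own. The first half of your proposal --- Katona's random-permutation argument --- is correct and complete: $\Pr[E_i]=\binom{a+b}{a}^{-1}$ because $A_i\cap B_i=\emptyset$, and your disjointness argument (comparing the positions $p_i\le p_j$ of the last elements of $A_i$ and $A_j$ and locating an $x\in A_i\cap B_j$) is exactly right, so $m\le\binom{a+b}{a}$ follows from the union bound. This is a more elementary route than the exterior-algebra machinery the paper develops for its own results, and it establishes the inequality cleanly.

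The equality characterization, however, is a genuine gap: you describe two plans and carry out neither, and the specific mechanism you name does not work as stated. If $v\notin A_{i_0}\cup B_{i_0}$, then sliding $v$ across the $A_{i_0}/B_{i_0}$ interface does not change whether $E_{i_0}$ holds, because $E_{i_0}$ depends only on the relative order of the elements of $A_{i_0}\cup B_{i_0}$; the modified permutation still lies in $E_{i_0}$ and in no other $E_j$, so no contradiction with the partition property arises from that move alone. A repair in the same probabilistic spirit is to condition on a fixed element being first: since $\Pr[E_i\mid v\text{ first}]$ equals $\binom{a+b}{a}^{-1}$ if $v\notin A_i\cup B_i$, equals $\binom{a+b-1}{a-1}^{-1}>\binom{a+b}{a}^{-1}$ if $v\in A_i$, and equals $0$ if $v\in B_i$, comparing $\sum_i\Pr[E_i\mid v\text{ first}]=1$ with $\sum_i\Pr[E_i]=1$ forces a balance between these cases for every $v$, from which $|V|=a+b$ and $B_i=V\setminus A_i$ can be extracted; the $A_i$ are pairwise distinct (otherwise $A_j\cap B_j\neq\emptyset$) and number $\binom{a+b}{a}$, so they exhaust $\binom{V}{a}$. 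None of this bookkeeping appears in your proposal, so the ``only if'' direction of the equality statement remains unproved as written.
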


Over the years, different methods of proofs for  Bollob\'{a}s' Theorem and for its various extentions have been discovered (see \cite{Furedi1984,SW2019,NA1985,AK1985,AB1990,GS1984,GK19841,GK19842,GK1974,LL1977,TT1975,PF1982,KK2015,KN2012,BP2019,JT2004,ZT1987,ZT1994,ZT1996}). Among these proofs, the one using \emph{exterior algebra} (or \emph{wedge product}), Lov\'{a}sz's proof \cite{LL1979} turns out to be strikingly elegant and provides a brand-new perspective on problems dealing with pairs of sets or subspaces with these types of constraints. In the same paper, Lov\'{a}sz also generalized this theorem to linear subspaces.

Using Lov\'{a}sz's method, in 1984, F\"{u}redi \cite{Furedi1984} proved the following threshold version of Bollob\'{a}s' theorem for linear subspaces.

\begin{theorem}\label{threshold_Furedi}\cite{Furedi1984}
Let $(A_1,B_1),\ldots,(A_m,B_m)$ be pairs of non-trivial subspaces of a finite dimensional real vector space with $\dim(A_i)=a$ and $\dim(B_i)=b$ for every $1\leq i\leq m$. Suppose that for some $t\geq 0$,
\begin{itemize}
  \item $\dim(A_i\cap B_i)\leq t$, for $1\leq i\leq m$, and
  \item $\dim(A_i\cap B_j)\geq t+1$, for $1\leq i<j\leq m$.
\end{itemize}
Then
\begin{equation}\label{Furedi_ineq}
m\leq {{a+b-2t}\choose a-t}.
\end{equation}
\end{theorem}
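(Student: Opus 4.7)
The plan is to induct on the threshold $t$, reducing to a smaller threshold by intersecting everything with a generic hyperplane of the ambient space. The base case $t=0$ is exactly Lov\'asz's extension of Bollob\'as's theorem to linear subspaces (in its skew form, since the off-diagonal condition is only assumed for $i<j$), which is proved by the exterior algebra method: associate to each pair $(A_i,B_i)$ wedge products $\alpha_i\in\Lambda^a V$ and $\beta_i\in\Lambda^b V$; the hypothesis $\dim(A_i\cap B_i)=0$ gives $\alpha_i\wedge\beta_i\neq 0$, while $\dim(A_i\cap B_j)\ge 1$ for $i<j$ gives $\alpha_i\wedge\beta_j=0$; a triangular argument then yields linear independence of the $\alpha_i$ in a generic $(a+b)$-dimensional quotient, forcing $m\le\binom{a+b}{a}$.

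For the inductive step $t\ge 1$, I would choose a hyperplane $H\subset V$ in general position (enlarging $V$ first if needed, so that no $A_i$ or $B_i$ is forced to equal $V$ itself), and set $A_i'=A_i\cap H$ and $B_i'=B_i\cap H$. For generic $H$, one verifies three dimension estimates: $\dim A_i'=a-1$ and $\dim B_i'=b-1$ (transversality of $H$ with each proper subspace $A_i,B_i$); $\dim(A_i'\cap B_i')=\dim((A_i\cap B_i)\cap H)\le\max(0,\dim(A_i\cap B_i)-1)\le t-1$; and $\dim(A_i'\cap B_j')=\dim((A_i\cap B_j)\cap H)\ge(t+1)-1=t$ for $i<j$. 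Hence the collection $\{(A_i',B_i')\}_{i=1}^{m}$ satisfies the theorem's hypotheses inside $H$ with parameters $(a-1,b-1,t-1)$, and the induction hypothesis gives
\[
m\le\binom{(a-1)+(b-1)-2(t-1)}{(a-1)-(t-1)}=\binom{a+b-2t}{a-t}.
\]

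The main obstacle is a bookkeeping one: choosing a single hyperplane $H$ that is simultaneously transverse to the (finitely many) relevant subspaces — each $A_i$, each $B_i$, each nonzero $A_i\cap B_i$, and each $A_i\cap B_j$ for $i<j$ — and that moreover keeps the $m$ pairs $(A_i',B_i')$ pairwise distinct, so that the inductive bound applies to all of them. Each individual transversality or distinctness condition fails only on a proper closed subvariety of the dual projective space $\mathbb{P}(V^*)$ parametrizing hyperplanes through the origin, so a generic $H$ avoids all of them at once. Pairwise distinctness of the \emph{original} pairs is automatic — $(A_i,B_i)=(A_j,B_j)$ with $i\neq j$ would force $\dim(A_i\cap B_i)=\dim(A_i\cap B_j)\ge t+1$, contradicting the diagonal bound — and this distinctness is preserved after intersection with a generic $H$. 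With these verifications in place, the induction runs cleanly down to the base case and delivers the claimed bound.
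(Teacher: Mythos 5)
Your proposal is correct and is essentially the standard (F\"uredi) argument, which is also the technique this paper itself uses for its main theorem: reduce the threshold-$t$ hypotheses to the $t=0$ case by a general-position intersection (Lemma \ref{spacegp} does your $t$ hyperplane steps in one codimension-$t$ stroke), then apply the skew exterior-algebra/triangular-criterion bound $\binom{a+b}{a}$ in a generic $(a+b)$-dimensional space. The only inessential difference is that you iterate a codimension-one cut $t$ times rather than invoking the codimension-$t$ general position lemma once, and your worry about keeping the pairs distinct is superfluous since the theorem bounds the length of the sequence, not the number of distinct pairs.
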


Recently, following the path led by Lov\'{a}sz and F\"{u}redi, Scott and Wilmer \cite{SW2019} established a new correspondence between exterior algebra and hypergraphs. It turns out to be an effective way to tackle pairs of set systems with the Bollob\'{a}s-type cross-intersecting requirements. As an application of their method, Scott and Wilmer proved the following weighted Bollob\'{a}s' theorem for finite-dimensional real vector spaces.

\begin{theorem}\label{weighted_space}\cite{SW2019}
Let $(A_1,B_1),\ldots,(A_m,B_m)$ be pairs of non-trivial subspaces of a finite dimensional real vector space. Write $a_i=\dim(A_i)$ and $b_i=\dim(B_i)$ for every $1\leq i\leq m$. Suppose that
\begin{itemize}
  \item $\dim(A_i\cap B_i)=0$, for $1\leq i\leq m$,
  \item $\dim(A_i\cap B_j)>0$, for $1\leq i\neq j\leq m$, and
  \item $a_1\leq a_2\leq \ldots\leq a_m$ and $b_1\geq b_2\geq\ldots\geq b_m$.
\end{itemize}
Then
\begin{equation}\label{weightedsp_ineq}
\sum_{i=1}^{m}{{a_i+b_i\choose a_i}}^{-1}\leq 1.
\end{equation}
\end{theorem}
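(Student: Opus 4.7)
My plan is to prove Theorem~\ref{weighted_space} via the exterior-algebra technique pioneered by Lov\'asz and refined by Scott-Wilmer. Let $V$ denote the ambient real vector space, which after restricting to $\sum_i(A_i+B_i)$ we may take to be finite-dimensional. For each $i$, fix ordered bases of $A_i$ and $B_i$ and form the decomposable wedges $\alpha_i\in\Lambda^{a_i}(V)$ and $\beta_i\in\Lambda^{b_i}(V)$. The hypotheses translate directly into the exterior identities $\alpha_i\wedge\beta_i\neq 0$ for every $i$ and $\alpha_i\wedge\beta_j=0$ for all $i\neq j$. The task is then to see how the monotonicity $a_1\le\cdots\le a_m$, $b_1\ge\cdots\ge b_m$ converts these vanishing and non-vanishing statements into the reciprocal-binomial bound.

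Next I would install a generic inner product on $V$, inducing inner products on each $\Lambda^k(V)$ under which monomials in an orthonormal basis remain orthonormal. The weight $\binom{a_i+b_i}{a_i}^{-1}$ should appear naturally as the squared-norm ratio between the single ``aligned'' wedge $\alpha_i\wedge\beta_i/\|\alpha_i\wedge\beta_i\|\in\Lambda^{a_i+b_i}(A_i+B_i)$ and the sum over all $\binom{a_i+b_i}{a_i}$ possible $a_i$-versus-$b_i$ splittings of an orthonormal basis of $A_i+B_i$. The core step is then to exhibit, inside a single ambient exterior power $\Lambda^N(V)$, suitable lifts $\widetilde{\omega}_i$ of the $\omega_i := \alpha_i\wedge\beta_i$ that are pairwise orthogonal and satisfy $\|\widetilde{\omega}_i\|^2 = \binom{a_i+b_i}{a_i}^{-1}$; Bessel's inequality applied to a fixed unit reference form then yields $\sum_i\binom{a_i+b_i}{a_i}^{-1}\le 1$. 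The orthogonality of the lifts is essentially a transcription of $\alpha_i\wedge\beta_j=0$ via the wedge-against-$\beta_j$ pairing trick used by Lov\'asz in the unweighted case.

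The main obstacle I anticipate is the simultaneous calibration of the lifts. Because the dimensions $a_i+b_i$ are heterogeneous, one cannot leave each $\omega_i$ in its native $\Lambda^{a_i+b_i}(V)$; each must be padded--on the $A$-side, on the $B$-side, or both--into a common $\Lambda^N(V)$. The monotonicity $a_1\le\cdots\le a_m$ and $b_1\ge\cdots\ge b_m$ is precisely what makes a coherent padding possible: it permits an asymmetric scheme in which pairs with smaller $a_i$ are padded on the $A$-side and those with larger $a_i$ on the $B$-side, so that both the exact weight $\binom{a_i+b_i}{a_i}^{-1}$ and the cross-term vanishing survive the embedding into $\Lambda^N(V)$. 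Designing this padding carefully, and verifying that every cross pairing $\langle\widetilde{\omega}_i,\widetilde{\omega}_j\rangle$ truly vanishes in both regimes $a_i<a_j$ and $a_i>a_j$, is where I expect the bulk of the technical work to lie.
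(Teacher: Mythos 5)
Your translation of the hypotheses into wedge-product identities is the right starting point, but the heart of your argument --- pairwise orthogonal lifts $\widetilde{\omega}_i$ with $\|\widetilde{\omega}_i\|^2=\binom{a_i+b_i}{a_i}^{-1}$ followed by Bessel --- is exactly the part that is never constructed, and there are concrete reasons to doubt it can be built as described. First, the conditions $\alpha_i\wedge\beta_j=\mathbf{0}$ for $i\neq j$ and $\alpha_i\wedge\beta_i\neq\mathbf{0}$ are statements about the bilinear wedge pairing $\bigwedge^{a_i}V\times\bigwedge^{b_j}V\to\bigwedge^{a_i+b_j}V$; what they deliver is linear independence of the $\alpha_i$ (a nonsingular ``triangular'' system, as in Lov\'asz's proof and in the Triangular Criterion of \cite{LB1992}), not orthogonality of the $\omega_i=\alpha_i\wedge\beta_i$ with respect to any inner product, and choosing a generic inner product does not convert the one into the other. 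Second, even granting pairwise orthogonal lifts with $\|\widetilde{\omega}_i\|^2=w_i:=\binom{a_i+b_i}{a_i}^{-1}$, Bessel's inequality against a unit reference form $u$ gives $\sum_i w_i^{-1}\,|\langle u,\widetilde{\omega}_i\rangle|^2\leq 1$, so to conclude $\sum_i w_i\leq 1$ you additionally need the calibration $|\langle u,\widetilde{\omega}_i\rangle|=w_i$ for all $i$ simultaneously; nothing in your outline produces such a $u$. Your scheme is really the linear-algebra shadow of the random-permutation proof of the weighted Bollob\'as inequality for \emph{sets} (pairwise disjoint events with probabilities $\binom{a_i+b_i}{a_i}^{-1}$ are orthogonal indicator functions), but that argument requires no monotonicity of the $a_i$ and $b_i$, whereas the subspace statement is only known under that hypothesis --- a warning sign that the mechanism does not transfer, and that the role you assign to monotonicity (a ``coherent padding'') is carrying the entire proof while remaining unspecified.

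The proof in \cite{SW2019} (this paper only quotes the theorem, but runs the analogous argument in full for Theorem \ref{hemi_bundle_main}) proceeds quite differently: one builds a chain of subspaces $Z_0=\{\mathbf{0}\}$ and $Z_{i+1}=\mathrm{span}\{Z_i\wedge\bigwedge^{a_{i+1}-a_i}V,\ \wedge A_{i+1}\}\subseteq\bigwedge^{a_{i+1}}V$, shows that $\dim(Z_{i+1})=\dim(Z_i\wedge\bigwedge^{a_{i+1}-a_i}V)+1$ because wedging with $\wedge B_{i+1}$ annihilates $Z_i\wedge\bigwedge^{a_{i+1}-a_i}V$ (by the cross-vanishing conditions) but not $\wedge A_{i+1}$, and then converts these unit increments into the weights $\binom{a_i+b_i}{a_i}^{-1}$ by a local LYM inequality for subspaces of the exterior algebra (compare Theorem \ref{local_LYM_int_exalg} and the containment (\ref{containment})), applied through the initial-hypergraph correspondence of Lemma \ref{corr5}. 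The binomial weights thus arise from a shadow/LYM estimate, not from any orthogonality, and the orderings of the $a_i$ and $b_i$ are what make the chain construction and the LYM step legitimate. I would redirect your effort to that route.
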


In this paper, using exterior algebra method together with the new correspondence in \cite{SW2019}, we prove a new weighted Bollob\'as-type theorem for two families in real vector spaces. Comparing to Theorem~\ref{weighted_space}, we generalize the original constraints to $\dim(A_i\cap B_i) \leq t$ for $1\leq i\leq m$ and $\dim(A_i\cap B_j)> t$ for some integer $t\geq 0$ and $1\leq i<j\leq m$. Besides, there is an extra constraint about $\{A_i\}_{i=1}^{m}$ being $(t+1)$-intersecting. Thus, we call this theorem a ``hemi-bundled" weighted Bollob\'as theorem and its formal description is shown as follows.

\begin{theorem}\label{hemi_bundle_main}
Let $\{(A_i,B_i)\}_{i=1}^m$ be a collection of pairs of subspaces of a finite dimensional real vector space, such that $\dim(A_i)=a_i$ and $\dim(B_i)=b_i$ with $a_i \leq b_i$ for every $1\leq i\leq m$. Suppose that for some $t\geq0$
\begin{itemize}
    \item $\dim(A_i\cap A_j) > t$ for all $1\leq i,j\leq m,$
    \item $\dim(A_i\cap B_i) \leq t$ for all $1\leq i\leq m,$
    \item $\dim(A_i\cap B_j) > t$ for all $1\leq i<j \leq m,$
    \item $a_i+b_i = N$ for all $1\leq i \leq m$ and some positive integer $N$, with $a_1\leq a_2\leq \ldots\leq a_m$.
\end{itemize}
Then
\begin{equation}\label{weighted_main}
\sum_{i=1}^{m}{{N-(2t+1)\choose a_i-(t+1)}}^{-1}\leq 1.
\end{equation}
When $a_i<b_i$ for every $1\leq i\leq m$, equality holds only if $a_1=a_2=\ldots =a_m$ and $b_1=b_2=\ldots =b_m$.
\end{theorem}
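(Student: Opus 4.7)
\medskip
\noindent\textbf{Proof proposal.} The plan is to combine the exterior algebra encoding of Lov\'asz, F\"uredi's pivot trick for thresholds, and Scott--Wilmer's random-flag weighted averaging, while introducing one further ingredient to absorb the pairwise $(t+1)$-intersecting condition on the $A_i$'s.

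The setup is as follows. For each $i$, fix bases and form wedge products $\alpha_i\in\Lambda^{a_i}(\mathbb{R}^n)$ and $\beta_i\in\Lambda^{b_i}(\mathbb{R}^n)$. Under this encoding the three hypotheses admit clean algebraic translations: $\dim(A_i\cap B_j)>t$ forces $\alpha_i\wedge\beta_j\wedge\tau=0$ for every $t$-form $\tau$; $\dim(A_i\cap B_i)\leq t$ gives $\alpha_i\wedge\beta_i\wedge\tau\neq 0$ for a generic $t$-form $\tau$; and $\dim(A_i\cap A_j)>t$ yields the analogous vanishing on products of two $\alpha$'s. Following F\"uredi, I would fix a generic $t$-dimensional subspace $T\subset\mathbb{R}^n$ with wedge $\tau$ and replace each $\beta_i$ by $\beta_i\wedge\tau$ (equivalently, work with $B_i+T$); this converts the threshold cross-intersection hypotheses into a classical Bollob\'as setup at effective ambient dimension $N-t$, with $A_i$ of dimension $a_i$ and new $B_i'$ of dimension $b_i-t$. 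Running the Scott--Wilmer random-flag argument on this reduced system alone would already yield the weaker bound $\sum_i\binom{N-2t}{a_i-t}^{-1}\leq 1$, so we still need to squeeze out one further dimension from the $A$-side.

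The novel step uses the pairwise intersecting condition on $\{A_i\}$ to save that extra dimension. On the reduced system, pick a uniformly random ordered basis of the effective ambient space and, for each $i$, define an event $E_i$ that specifies how many initial basis vectors lie in $A_i$, how many in $B_i'$, and how many in the remaining complement, with one distinguished basis vector constrained to sit in a common $A_i\cap A_j$ direction detected by the pairwise condition. Exploiting the cross-intersection hypotheses together with the monotonicity $a_1\leq\cdots\leq a_m$, one shows that the events $E_i$ are pairwise disjoint; a direct multinomial computation then gives $\Pr(E_i)=\binom{N-2t-1}{a_i-t-1}^{-1}$, yielding \eqref{weighted_main}. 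The extremal case is read off by tracing equality through the disjointness argument: equality forces each $E_i$ to exhaust its probability budget, pinning the wedge products $\alpha_i$ down to a rigid common structure which, combined with the strict inequality $a_i<b_i$ and the monotonicity in $i$, forces $a_1=\cdots=a_m$ and $b_1=\cdots=b_m$.

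I expect the main obstacle to be implementing this ``save one dimension'' step rigorously. Pairwise $(t+1)$-intersection among the $A_i$'s does not guarantee a globally common $(t+1)$-subspace, so the event $E_i$ cannot be defined by quotienting by a single fixed subspace. The disjointness of the $E_i$ must therefore be established using only the pairwise condition, and harmonizing this with the Scott--Wilmer multinomial bookkeeping, so that the ``missing'' common direction costs exactly one dimension rather than $t+1$, is where the real technical work will lie.
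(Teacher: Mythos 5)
Your proposal reaches the right staging area---exterior algebra encoding, a F\"uredi-style generic reduction to dispose of the threshold $t$, and a weighted LYM-type accounting---but the one step that actually produces the theorem is missing, and you say so yourself. The entire content of the result is the improvement of the denominator from $\binom{N-2t}{a_i-t}$ to $\binom{N-2t-1}{a_i-t-1}$, which must come from the hypothesis that the $A_i$ are pairwise $(t+1)$-intersecting. Your mechanism for this is an event $E_i$ featuring ``one distinguished basis vector constrained to sit in a common $A_i\cap A_j$ direction detected by the pairwise condition,'' but this is not well defined: $j$ ranges over all other indices, pairwise intersection gives no globally common direction (as you acknowledge), and without a fixed distinguished direction neither the disjointness of the $E_i$ nor the computation $\Pr(E_i)=\binom{N-2t-1}{a_i-t-1}^{-1}$ can be carried out. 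As written, the proposal defers exactly the step that constitutes the proof. (Two smaller issues: the algebraic translations in your second paragraph are off---$\dim(A_i\cap B_i)\le t$ with $t>0$ still gives $\alpha_i\wedge\beta_i=0$, so wedging with a generic $t$-form cannot restore nonvanishing; and $B_i+T$ has dimension $b_i+t$, not $b_i-t$. The correct reduction, as in the paper, intersects everything with a generic subspace of codimension $t$ and then projects, lowering both $a_i$ and $b_i$ by $t$.)

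The paper's resolution of the ``save one dimension'' difficulty avoids any search for a common direction. After reducing to $t=0$ and ambient dimension $N$, it builds a nested sequence of subspaces $Z_{i+1}=\mathrm{span}\{Z_i\wedge\bigwedge^{a_{i+1}-a_i}V,\ \wedge A_{i+1}\}$; the pairwise intersecting condition on the $A_i$ is used only to make each $Z_i$ \emph{self-annihilating} ($u\wedge w=\mathbf{0}$ for $u,w\in Z_i$). Scott--Wilmer's theorem then says the initial hypergraph $\mathcal{H}_F(Z_i)$ is an intersecting set family, so its size is governed by Erd\H{o}s--Ko--Rado, and the telescoping is driven by an \emph{intersecting} local LYM inequality (Wang's, proved via Kruskal--Katona) with denominators $\binom{N-1}{r-1}$ rather than $\binom{N}{r}$. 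That is where the extra dimension is saved, at the level of initial hypergraphs rather than of the subspaces themselves; the cross-intersection conditions $\wedge A_h\wedge\wedge B_{i+1}=\mathbf{0}$ for $h\le i$ versus $\wedge A_{i+1}\wedge\wedge B_{i+1}\neq\mathbf{0}$ supply the ``$+1$'' in $\dim(Z_{i+1})=\dim(Y_i)+1$. If you want to salvage a probabilistic version of your argument, you would need a genuinely new averaging scheme that extracts the EKR-type saving from pairwise intersection alone (in the spirit of Katona's cycle method), and you would have to prove the disjointness claim without any common element; nothing in the current write-up does this.
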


As a direct corollary of Theorem~\ref{hemi_bundle_main}, we have the following ``hemi-bundled" Bollob\'as-type theorem for pairs of subsets.

\begin{theorem}\label{setint}
Let $\{(A_i,B_i)\}_{i=1}^m$ be a collection of pairs of sets such that for every $1\leq i\leq m$, $|A_i|=a_i \leq|B_i|=b_i$. Suppose that for some $t\geq 0$,
\begin{itemize}
    \item $|A_i\cap A_j|>t$ for all $1\leq i,j\leq m,$
    \item $|A_i\cap B_i|\leq t$ for all $1\leq i\leq m,$
    \item $|A_i\cap B_j|>t$ for all $1\leq i<j \leq m,$
    \item $a_i+b_i = N$ for all $1\leq i \leq m$ and some positive integer $N$, with $a_1\leq a_2\leq \ldots\leq a_m$.
\end{itemize}
Then
\begin{equation}\label{bounds_set}
\sum_{i=1}^{m}{{N-(2t+1)\choose a_i-(t+1)}}^{-1}\leq 1.
\end{equation}
\end{theorem}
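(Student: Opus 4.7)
The plan is to deduce Theorem \ref{setint} as an immediate consequence of Theorem \ref{hemi_bundle_main} via the standard encoding of subsets as coordinate subspaces of a real vector space. First I would set $U := \bigcup_{i=1}^{m}(A_i \cup B_i)$, put $n := |U|$, and identify $U$ with the standard basis $\{e_1,\ldots,e_n\}$ of $\mathbb{R}^n$. To each $S \subseteq U$ associate the coordinate subspace $V_S := \mathrm{span}\{e_x : x \in S\} \subseteq \mathbb{R}^n$.

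The decisive (and routine) observation is that this assignment is faithful to both cardinalities and intersections: because the $e_x$ are linearly independent, $\dim V_S = |S|$ and $V_S \cap V_T = V_{S \cap T}$, so $\dim(V_S \cap V_T) = |S \cap T|$ for all $S, T \subseteq U$. Applying this to the family $\{(V_{A_i}, V_{B_i})\}_{i=1}^{m}$, the four hypotheses of Theorem \ref{hemi_bundle_main} translate directly from the corresponding set-theoretic ones: $\dim(V_{A_i}\cap V_{A_j})=|A_i\cap A_j|>t$, $\dim(V_{A_i}\cap V_{B_i})=|A_i\cap B_i|\le t$, $\dim(V_{A_i}\cap V_{B_j})=|A_i\cap B_j|>t$ for $i<j$, and $\dim V_{A_i}+\dim V_{B_i}=a_i+b_i=N$ with $a_1\le\cdots\le a_m$. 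Theorem \ref{hemi_bundle_main} now yields \eqref{bounds_set} verbatim.

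There is essentially no obstacle here: the only content of the argument is that the coordinate-subspace encoding sends intersections to intersections and cardinalities to dimensions, which is the usual dictionary used in every passage from Bollob\'as-type theorems on subspaces to their set-theoretic analogues (and was the mechanism by which Lov\'asz's vector-space proof recovered the original Theorem \ref{ori_Bollobas}). Thus the proof is expected to be a short paragraph that records the embedding, checks the four bullet conditions, and cites Theorem \ref{hemi_bundle_main}.
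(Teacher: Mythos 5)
Your proposal is correct and is essentially identical to the paper's proof: the paper likewise takes the ground set $X=\bigcup_i(A_i\cup B_i)$, assigns to each element a basis vector of $\mathbb{R}^{|X|}$, passes to the spanned coordinate subspaces $\bar{A}_i,\bar{B}_i$, and applies Theorem \ref{hemi_bundle_main} (noting that the hypothesis $a_i<b_i$ there can be relaxed to $a_i\le b_i$ when one does not need the equality characterization). No substantive difference.
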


By taking $t=0$, $a_i=a$ and $b_i=b$ for all $1\leq i\leq m$, Theorem ~\ref{setint} has affirmed a recent conjecture proposed by Gerbner \emph{et al.} [10, Conjecture 2.4] during their study about set systems related to a house allocation problem. So Theorem ~\ref{setint} can be viewed as a threshold version of Conjecture 2.4 in \cite{GKMNPTX2019}.

As shown in Theorem~\ref{ori_Bollobas}, Bollob\'{a}s proved that the equality in (\ref{ori_ineq}) holds if and only if the ground set $X$ has cardinality $a+b$, $\{A_1,\ldots,A_m\}={X\choose a}$ and $B_i=X\setminus A_i$. In the same spirit, we determine the only structure of $\{(A_i,B_i)\}_{i=1}^{m}$ such that the equality holds in Theorem~\ref{setint} when $t=0$ and $a<b$.

\begin{theorem}\label{stab}
Let $\{(A_i,B_i)\}_{i=1}^m$ be a collection of pairs of sets such that for every $1\leq i\leq m$, $|A_i|=a<|B_i|=b$. Suppose that
\begin{itemize}
  \item $|A_i\cap A_j|>0$ for all $1\leq i,j\leq m,$
  \item $|A_i\cap B_j|=0$ if and only if $i=j$.
\end{itemize}
Then, $m={a+b-1\choose a-1}$ if and only if the ground set $X=\bigcup_{i=1}^{m} (A_i\cup B_i)$ has cardinality $a+b$, $\{A_i\}_{i=1}^{m}$ is a family of all subsets of $X$ of size $a$ containing a fixed element and $B_i=X\setminus A_i$ for each $i$.
\end{theorem}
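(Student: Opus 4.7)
The \emph{if} direction is a direct verification: take $X=[a+b]$, a fixed $x_0\in X$, let $\{A_i\}_{i=1}^{\binom{a+b-1}{a-1}}$ be the family of all $a$-subsets of $X$ containing $x_0$, and set $B_i=X\setminus A_i$; then $|A_i\cap A_j|\ge 1$ (both contain $x_0$), $|A_i\cap B_i|=0$, and $|A_i\cap B_j|=|A_i\setminus A_j|>0$ for $i\ne j$, and the total number of pairs is exactly $\binom{a+b-1}{a-1}$.

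For the \emph{only if} direction, suppose $m=\binom{a+b-1}{a-1}$. My plan is to first locate a common element $x_0\in\bigcap_i A_i$ (so $\{A_i\}$ is a star) and then invoke the equality case of Theorem~\ref{ori_Bollobas}. Given such an $x_0$, put $A_i':=A_i\setminus\{x_0\}$. Since $x_0\in A_j$ and $A_j\cap B_j=\emptyset$ for every $j$, we have $x_0\notin B_j$ for every $j$; hence $|A_i'|=a-1$, $|B_i|=b$, $A_i'\cap B_i=\emptyset$, and $A_i'\cap B_j=A_i\cap B_j\ne\emptyset$ for $i\ne j$. Thus $\{(A_i',B_i)\}$ satisfies the hypotheses of Theorem~\ref{ori_Bollobas} with shape $(a-1,b)$ at equality $m=\binom{(a-1)+b}{a-1}$, and its uniqueness conclusion produces a set $Y$ with $|Y|=a+b-1$ such that $\{A_i'\}=\binom{Y}{a-1}$ and $B_i=Y\setminus A_i'$. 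Since $x_0\notin A_i'\cup B_i$, we have $x_0\notin Y$, so $X=Y\sqcup\{x_0\}$ has $|X|=a+b$, the $A_i$'s are precisely the $a$-subsets of $X$ through $x_0$, and $B_i=X\setminus A_i$.

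To produce $x_0$, set $X_A:=\bigcup_i A_i$; I will show $|X_A|=a+b$ and then invoke Erd\H{o}s--Ko--Rado uniqueness (valid since $a<b$ forces $|X_A|>2a$) to identify $\{A_i\}$ as a star. The lower bound $|X_A|\ge a+b$ is quick: when $|X_A|\ge 2a$, EKR applied to the intersecting family $\{A_i\}$ gives $\binom{|X_A|-1}{a-1}\ge m=\binom{a+b-1}{a-1}$, hence $|X_A|\ge a+b$; and the case $|X_A|<2a$ is excluded because $m\le\binom{|X_A|}{a}\le\binom{2a-1}{a-1}<\binom{a+b-1}{a-1}$.

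\textbf{Main obstacle.} The substantive step is the reverse inequality $|X_A|\le a+b$. My plan is to extract equality information from the proof of Theorem~\ref{setint} specialized to $t=0$ with uniform parameters: the exterior-algebra argument assigns to each $A_i$ a wedge vector (after using the intersecting condition on $\{A_i\}$ to factor out a common direction), and saturation of~\eqref{bounds_set} constrains these vectors to span a subspace of dimension at most $\binom{a+b-1}{a-1}$, forcing the ambient ground set to have size $a+b$. A purely combinatorial alternative is to analyze transversals: for each $i$, $B_i\cap X_A$ must be a transversal of $\{A_j\setminus A_i\}_{j\ne i}$ inside $X_A\setminus A_i$ of size at most $b$, and any element of $X_A$ beyond the $a+b$ needed for the would-be star configuration would either force some $B_i\cap X_A$ to be too large or cause some pair to violate~(ii) or~(iii), contradicting $m=\binom{a+b-1}{a-1}$. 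This equality analysis is where most of the work will concentrate.
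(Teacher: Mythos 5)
Your ``if'' direction and your endgame (delete a common element $x_0\in\bigcap_i A_i$, pass to $\{(A_i\setminus\{x_0\},B_i)\}_{i=1}^m$, and invoke the uniqueness part of Theorem~\ref{ori_Bollobas} at equality $m=\binom{(a-1)+b}{a-1}$) are both correct, and the endgame coincides with the last paragraph of the paper's proof. The genuine gap is the production of $x_0$, i.e.\ the proof that $\{A_i\}_{i=1}^m$ is a star: you reduce this to the inequality $\left|\bigcup_i A_i\right|\le a+b$ and then explicitly defer that inequality, offering only two unworked sketches. Neither sketch closes the gap. The first (saturation of \eqref{bounds_set} ``forcing the ambient ground set to have size $a+b$'') does not follow: the dimension bound on $W=\mathrm{span}\{\wedge\bar{A}_1,\dots,\wedge\bar{A}_m\}$ comes from self-annihilation inside $\bigwedge^a\mathbb{R}^{a+b}$ \emph{after} the points of $X$ have already been mapped to vectors of $\mathbb{R}^{a+b}$, so it holds no matter how large $X$ is and cannot by itself bound $|X|$; indeed the paper obtains $|X|=a+b$ only at the very end, as a consequence of the star structure together with Bollob\'as uniqueness, not as a prior step. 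The second sketch (transversals) presupposes ``the $a+b$ needed for the would-be star configuration,'' i.e.\ it assumes the structure it is meant to establish.

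What the paper actually does at this point is the real content of the proof and is absent from your proposal: assign to the points of $X$ vectors of $\mathbb{R}^{a+b}$ in \emph{general position}, observe that $W=\mathrm{span}\{\wedge\bar{A}_i\}$ is a self-annihilating subspace of dimension exactly $m=\binom{a+b-1}{a-1}$ (linear independence via the Triangular Criterion, using the $B_i$), apply Theorem~\ref{selfann} and Erd\H{o}s--Ko--Rado uniqueness to the initial hypergraph $\mathcal{H}_F(W)\subseteq\binom{[a+b]}{a}$ --- which lives on $a+b$ points by construction, thereby sidestepping your $\left|\bigcup_i A_i\right|\le a+b$ issue entirely --- and then transfer the star property from $\mathcal{H}_F(W)$ back to the actual family $\{A_i\}$ by a delicate argument combining the reverse colex order on initial sets with the general-position assumption. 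That transfer step is where the difficulty of the theorem is concentrated, and your proposal contains no substitute for it.
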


\begin{Remark}
When $a=b$, the ground set $X$ has cardinality $2a$, due to the diversity of the extremal intersecting families of $\{A_i\}_{i=1}^m$, the structures of the extremal families $\{A_i\}_{i=1}^{m}$ and  $\{B_i\}_{i=1}^{m}$ with the above conditions are not unique either.
\end{Remark}

The rest of the paper is organized as follows. In Section 2, we introduce the definitions, notations and some known results that we shall use throughout our paper. In Section 3, we present the proofs of our results. Finally, in Section 4, we conclude our paper with some comments and open problems.

\section{Preliminaries}

In this section, we recall the connection between exterior algebra and hypergraphs introduced by Scott and Wilmer in \cite{SW2019} and some other known results for the proof of our theorems.

\subsection{Exterior algebra and hypergraphs}

Given integers $n$ and $r$ with $0\leq r\leq n$, let $[n]=\{1,\ldots,n\}$ and ${{[n]}\choose r}=\{A\subseteq[n]:|A|=r\}$. A hypergraph $\mathcal{A}$ with ground set $[n]$ is called \emph{$r$-uniform} if $\mathcal{A}\subseteq {{[n]}\choose r}$. Throughout this paper, for any finite set of integers $S$, elements of $S=\{n_1,n_2,\ldots,n_{|S|}\}\subseteq [n]$ are arranged by $n_1<n_2<\ldots<n_{|S|}$.

Let $V=\mathbb{R}^{n}$ be the $n$-dimensional real space with standard basis $E=\{e_1,\ldots,e_n\}$. Write
\begin{equation*}
\bigwedge V=\bigoplus_{r=0}^{n}{\bigwedge}^{r}V
\end{equation*}
for the standard grading of the exterior algebra of $V$, where $\bigwedge^{r}V$ is the $r$th  exterior power of $V$ generated by elements of the form $e_{i_1}\wedge e_{i_2}\wedge\cdots\wedge e_{i_r}$. For an invertible linear transformation $F\in GL_{n}(\mathbb{R})$, the $E$-matrix of $F$ is defined as the representing matrix of $F$ with respect to the standard basis $E$. If there is no confusion, we also use $F$ to denote the $E$-matrix of $F$ and the columns and the entries of matrix $F$ are denoted by
\begin{equation*}
F=(f_1|\ldots|f_n)=(f_{ij})_{n\times n}.
\end{equation*}
For a subset $A\in{{[n]}\choose r}$, define
\begin{equation*}
f_{A}=\bigwedge_{a\in A}f_{a} \in {\bigwedge}^{r}V.
\end{equation*}
According to this definition, we have
\begin{equation}\label{corr_key}
f_{A}\wedge f_{B}=\left\{
\begin{array}{ll}
\mathbf{0}&A\cap B\neq \emptyset;\\
(-1)^{\rho(A,B)}f_{A\cup B}&A\cap B=\emptyset,
\end{array}\right.
\end{equation}
where $\rho(A,B)$ is defined as follows
\begin{align*}
  \rho(A,B)=|\{(a,b)\in A\times B: a>b\}|.
\end{align*}
By the linearity of  wedge product, the set $\mathcal{W}(F,n,r)=\{f_{A}:A\in{{[n]}\choose r}\}$ is a basis for the linear space $\bigwedge^{r}V$ and $\dim(\bigwedge^{r}V)={n\choose r}$.

For an $r$-uniform hypergraph $\mathcal{A}\subseteq {{[n]}\choose r}$, define
\begin{equation*}
W(F,\mathcal{A})=\text{span}\{f_{A}: A\in\mathcal{A}\}
\end{equation*}
as the linear subspace of $\bigwedge^{r}V$ corresponding to $\mathcal{A}$. Note that $\dim(W(F,\mathcal{A}))=|\mathcal{A}|$, and that both $f_A$ and $W(F,\mathcal{A})$  depend on the choice of $F$. On the other hand, we call a subspace $W\subseteq \bigwedge^{r}V$ \emph{monomial with respect to} $F$ if $W=W(F,\mathcal{A})$ for some hypergraph $\mathcal{A}\subseteq {{[n]}\choose r}$. Given a non-zero $w\in \bigwedge^{r}V$, we can expand $w$ in the basis $\mathcal{W}(F,n,r)$ as $w=\sum_{A\in{[n]\choose r}}m_Af_A$. The \emph{initial set}  of $w$  with respect to $F$, denoted by $\text{ins}_{F}(w)$, is defined as follows:
\begin{equation}\label{corr3}
\text{ins}(w)=\max\{A\in{[n]\choose r}:m_A\neq0\}\in{{[n]}\choose r},
\end{equation}
where the maximum is taken with respect to \emph{reverse colex order} on ${[n]\choose r}$: for $A,B\in {[n]\choose r}$, we say $A>B$ if the largest element of the symmetric difference of $A$ and $B$ is an element of $B$. See examples in \citep[Chapter 5]{Bollobas1986}. Note that $w\mapsto \text{ins}(w)$ forms a surjection from all vectors in $\bigwedge^{r}V$ to all subsets in ${[n]\choose r}$. Given a subspace $W\subseteq \bigwedge^{r}V$, one can define the \emph{initial hypergraph} of $W$ with respect to $F$ by
\begin{equation}\label{corr4}
\mathcal{H}_{F}(W)=\{\text{ins}(w):w\in W,w\neq \mathbf{0}\}\subseteq {[n]\choose r}.
\end{equation}

Based on the correspondence between hypergraphs and subspaces mentioned above, Scott and Wilmer \cite{SW2019} proved the following basic results, which indicates that $\mathcal{A}\mapsto W(F,\mathcal{A})$ forms a bijection between $r$-uniform hypergraphs on $[n]$ and subspaces of $\bigwedge^{r}V$ monomial with respect to the fixed basis $\{f_1,\ldots,f_n\}$ from $F$.
\begin{lemma}\label{corr5}\cite{SW2019}
Let $V=\mathbb{R}^{n}$, $F\in GL_{n}(\mathbb{R})$, and $0\leq r\leq n$. Then
\begin{center}
\begin{itemize}
  \item [(i)] $\dim(W)=|\mathcal{H}_{F}(W)|$ for any subspace $W\subseteq \bigwedge^{r}V$.
  \item [(ii)] $W(F,\mathcal{H}_{F}(W))=W$ for $W$ monomial with respect to $F$.
  \item [(iii)] $\mathcal{H}_{F}(W(F,\mathcal{A}))=\mathcal{A}$ for any $\mathcal{A}\subseteq {[n]\choose r}$.
\end{itemize}
\end{center}
\end{lemma}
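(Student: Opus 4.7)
The plan is to prove the three parts in the order (iii), (ii), (i), since each subsequent item builds cleanly on the previous. Throughout, I would work in the basis $F_r=\{f_A : A\in\binom{[n]}{r}\}$ of $\bigwedge^{r}V$ and rely on the fact that each $f_A$ has a single nonzero coefficient (equal to $1$) in this basis, so that $ins(f_A)=A$.

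For (iii), I would establish both inclusions directly. The containment $\mathcal{A}\subseteq \mathcal{H}_F(F(\mathcal{A}))$ follows immediately from $ins(f_A)=A$ for each $A\in\mathcal{A}$. For the reverse, any nonzero $w\in F(\mathcal{A})$ expands as $w=\sum_{A\in\mathcal{A}} c_A f_A$, so the $F_r$-support of $w$ is contained in $\mathcal{A}$, forcing $ins(w)\in\mathcal{A}$. Item (ii) is then essentially a corollary of (iii): monomiality gives $W=F(\mathcal{A})$ for some $\mathcal{A}$; applying (iii) yields $\mathcal{H}_F(W)=\mathcal{A}$, hence $F(\mathcal{H}_F(W))=F(\mathcal{A})=W$.

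For (i), the strategy is a Gaussian elimination whose pivots are chosen according to the reverse colex order on $\binom{[n]}{r}$. Starting from any basis $v_1,\ldots,v_k$ of $W$, I would iteratively reduce it to a basis $w_1,\ldots,w_k$ of $W$ with pairwise distinct initial sets $ins(w_1),\ldots,ins(w_k)$; this already yields $|\mathcal{H}_F(W)|\geq k=\dim W$. Conversely, every nonzero $w\in W$ can be written as $w=\sum_i a_i w_i$, and among the indices with $a_i\neq 0$ the one realizing the largest $ins(w_i)$ contributes a term at that coordinate which cannot be cancelled by the other $w_j$'s, whose pivots are strictly smaller. Hence $ins(w)$ must coincide with one of the $ins(w_i)$, giving $|\mathcal{H}_F(W)|\leq k$.

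The main subtlety lies in executing the elimination of (i) correctly with respect to the reverse colex order and in verifying that after reduction no linear combination of the $w_i$'s can cancel its own leading pivot. This reduces to the fact that, with a total order fixed on the basis vectors of $F_r$, the initial set of any nonzero linear combination equals the largest pivot actually present in the combination, which is precisely what produces the bijective correspondence between a reduced basis of $W$ and $\mathcal{H}_F(W)$ and thus closes the loop.
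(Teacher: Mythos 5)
Your argument is correct: (iii) follows from the fact that the $F_r$-support of any element of $F(\mathcal{A})$ lies in $\mathcal{A}$ while $ins(f_A)=A$, (ii) is an immediate corollary, and (i) is the standard leading-term/Gaussian-elimination argument with coordinates totally ordered by reverse colex, where distinct pivots guarantee that $ins(\sum_i a_i w_i)$ equals the largest pivot with $a_i\neq 0$. The paper itself gives no proof of this lemma (it is quoted from \cite{SW2019}), and your reasoning is essentially the argument given there, so there is nothing to add.
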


A hypergraph $\mathcal{A}$ is called \emph{intersecting} if $A\cap B\neq \emptyset$ for all $A,B\in \mathcal{A}$. When $\mathcal{A}\subseteq {[n]\choose r}$ and $r\leq n/2$, the classical Erd\H{o}s-Ko-Rado theorem \cite{EKR1961} says that  $|\mathcal{A}|\leq {n-1\choose r-1}$, and the equality holds when $r<n/2$ if and only if  $\mathcal{A}$ is a \emph{full $1$-star}, that is a hypergraph consisting of all $r$-subsets containing a fixed element. Scott and Wilmer \cite{SW2019} introduced the definition of self-annihilating subspaces, where a subspace $W\subseteq \bigwedge V$ is \emph{self-annihilating} if $v\wedge w=\mathbf{0}$ for all $v,w\in W$. They  showed the following interesting result about self-annihilating subspaces of the exterior algebra.
\begin{theorem}\label{selfann}\cite{SW2019}
Let $V=\mathbb{R}^n$ and let $W$ be a self-annihilating subspace of $\bigwedge^{r} V$ with $ r\leq n/2$. Then $\mathcal{H}_F(W) \subseteq {[n]\choose r}$ is an intersecting hypergraph, and thus
\begin{equation}\label{key_selfann}
\dim(W)=|\mathcal{H}_{F}(W)|\leq {n-1\choose r-1}.
\end{equation}
\end{theorem}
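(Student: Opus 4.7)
The dimension equality $\dim(W)=|\mathcal{H}_{F}(W)|$ is already furnished by Lemma \ref{corr5}(i), so the content of the theorem lies in showing that $\mathcal{H}_{F}(W)$ is an intersecting family. Once this is established, applying the classical Erd\H{o}s-Ko-Rado theorem to an intersecting $r$-uniform family on $[n]$ with $r\leq n/2$ immediately yields $|\mathcal{H}_{F}(W)|\leq \binom{n-1}{r-1}$, finishing the proof. The plan therefore reduces entirely to establishing the intersecting property.

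I would argue by contradiction. Suppose nonzero $v,w\in W$ satisfy $A:=ins_{F}(v)$ and $B:=ins_{F}(w)$ with $A\cap B=\emptyset$; I will deduce that $v\wedge w\neq \mathbf{0}$, contradicting the self-annihilation hypothesis. Expanding in the basis $F_{r}$, write $v=\sum_{A'}m_{A'}f_{A'}$ and $w=\sum_{B'}n_{B'}f_{B'}$, where by definition of the initial set, $m_{A'}=0$ whenever $A'>A$ and $n_{B'}=0$ whenever $B'>B$ in reverse colex order on $\binom{[n]}{r}$. Using the product rule (\ref{corr_key}),
\begin{equation*}
v\wedge w \;=\; \sum_{\substack{A'\leq A,\; B'\leq B \\ A'\cap B'=\emptyset}} \epsilon(A',B')\, m_{A'}n_{B'}\, f_{A'\cup B'},
\end{equation*}
where $\epsilon(A',B')=\pm 1$ is the appropriate permutation sign.

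The heart of the matter is the following key lemma about reverse colex order: for all $A',B'\in\binom{[n]}{r}$ with $A'\leq A$, $B'\leq B$ and $A'\cap B'=\emptyset$, one has $A'\cup B'\leq A\cup B$ in reverse colex order on $\binom{[n]}{2r}$, with equality exactly when $(A',B')=(A,B)$. Granting this lemma, no $C>A\cup B$ receives any contribution in the expansion above, while the coefficient of $f_{A\cup B}$ comes from the sole pair $(A',B')=(A,B)$ and equals $\pm m_{A}n_{B}\neq 0$. Hence $ins_{F}(v\wedge w)=A\cup B$ and $v\wedge w\neq \mathbf{0}$, delivering the desired contradiction.

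To prove the key lemma, I plan to let $z$ be the largest element of the symmetric difference $(A'\cup B')\triangle (A\cup B)$ and show that $z\in A'\cup B'$. Assuming for contradiction that $z\in (A\cup B)\setminus (A'\cup B')$, and by symmetry taking $z\in A$, the hypothesis $A'\leq A$ combined with $z\in A\triangle A'$ forces $x:=\max(A\triangle A')\in A'\setminus A$ with $x>z$; maximality of $z$ then prevents $x$ from lying outside $A\cup B$, so $x\in B$. Thus $x\in B\triangle B'$, and similarly $B'\leq B$ produces $y:=\max(B\triangle B')\in B'\setminus B$ with $y>x$; the same dichotomy places $y$ in $A\setminus A'\subseteq A\triangle A'$, so $y\leq x$, contradicting $y>x$. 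The uniqueness half of the lemma (equality forces $(A',B')=(A,B)$) runs along identical lines. The main obstacle is keeping this alternating chase between the two reverse-colex hypotheses organized; beyond this combinatorial bookkeeping, the only tools needed are identity (\ref{corr_key}), Lemma \ref{corr5}(i), and Erd\H{o}s-Ko-Rado.
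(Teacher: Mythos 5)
Your proof is correct: the reverse-colex lemma (that $A'\le A$, $B'\le B$, $A'\cap B'=\emptyset$ force $A'\cup B'\le A\cup B$ with equality only at $(A',B')=(A,B)$) is exactly the right engine, the alternating maximality chase goes through, and it yields $ins_F(v\wedge w)=ins_F(v)\cup ins_F(w)$ for initial sets that are disjoint, contradicting self-annihilation; Erd\H{o}s--Ko--Rado then gives the bound. Note that the paper itself gives no proof of this statement --- it is quoted from Scott and Wilmer \cite{SW2019} --- and your argument is essentially a faithful reconstruction of their proof, so there is nothing further to compare.
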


It is easy to see that the vector space analogue of a full $1$-star,  $\{v\wedge z:z\in \bigwedge^{r-1}V\}$ is a self-annihilating subspace of $\bigwedge^{r} V$ with dimension ${n-1\choose r-1}$. However, it is still an open problem to prove that all extremal self-annihilating subspaces take the form of a full $1$-star for $r<n/2$.

\subsection{The local LYM-inequalities}

As an elementary result in extremal set theory, the local LYM-inequality is closely related to Theorem~\ref{ori_Bollobas} and Sperner's theorem for antichains.
Given a uniform hypergraph $\mathcal{A}\subseteq{[n]\choose a}$, for a non-negative integer $b$ satisfying $a+b\leq n$, denote $\partial^b\mathcal{A}:=\{B\in {[n]\choose a+b}:\exists A\in \mathcal{A}, s.t.~A\subseteq B\}$ as the \emph{$b$-upper shadow} of $\mathcal{A}$; and for a non-negative integer $b'$ satisfying $b'\leq a$, denote $\delta_{b'}(\mathcal{A}):=\{B\in {[n]\choose b'}: \exists A\in \mathcal{A},s.t.~B\subseteq A\}$ as the \emph{$b'$-lower shadow} of $\mathcal{A}$. The local LYM-inequality \citep[Theorem 3.3]{Bollobas1986} is stated as follows:
\begin{equation*}
\frac{|\partial^b \mathcal{A}|}{{n\choose a+b}}\geq \frac{|\mathcal{A}|}{{n\choose a}}.
\end{equation*}
As an immediate consequence of the Erd\H{o}s-Ko-Rado theorem and the Kruskal-Katona theorem, we have the following local LYM-inequality for intersecting family (see also inequality (3) in \cite{JW2011}). For completeness, we include the proof here.

\begin{lemma}\label{local_LYM_int}
Let $n, a,b$ be non-negative integers with $a+b\leq n$ and $2a\leq n$. If $\mathcal{A}\subseteq{[n]\choose a}$ is an intersecting hypergraph, then
\begin{equation*}
\frac{|\partial^b \mathcal{A}|}{{n-1\choose a+b-1}}\geq \frac{|\mathcal{A}|}{{n-1\choose a-1}}.
\end{equation*}
When $2a<n$, equality holds if and only if $\mathcal{A}$ is a full $1$-star or $b=0$. Particularly  when $\mathcal{A}$ consists of only one $a$-set, the equality holds if and only if $b=0.$
\end{lemma}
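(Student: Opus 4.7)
The plan is to reformulate the inequality via the binomial identity
\[
\binom{n-1}{a-1}\binom{n-a}{b}=\binom{a+b-1}{a-1}\binom{n-1}{a+b-1},
\]
turning the conclusion into the equivalent form $|\partial^{b}\mathcal{A}|\binom{a+b-1}{a-1}\ge|\mathcal{A}|\binom{n-a}{b}$, and then to establish this by double-counting the pairs $(A,B)$ with $A\in\mathcal{A}$ and $A\subseteq B\in\binom{[n]}{a+b}$. Summing over $A$ yields exactly $|\mathcal{A}|\binom{n-a}{b}$, whereas summing over $B\in\partial^{b}\mathcal{A}$ yields $\sum_{B}|\mathcal{A}_{B}|$, where $\mathcal{A}_{B}:=\{A\in\mathcal{A}:A\subseteq B\}$ is an intersecting sub-family of $\binom{B}{a}$ on the $(a+b)$-element ground set $B$.

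In the regime $b\ge a$, the $(a+b)$-set $B$ satisfies $|B|\ge 2a$, so Erd\H{o}s--Ko--Rado applied inside $B$ immediately gives $|\mathcal{A}_{B}|\le\binom{a+b-1}{a-1}$, and summation over $B$ closes the argument. In the complementary regime $b<a$, every two $a$-subsets of $B$ already meet, so EKR on $B$ becomes vacuous; here I would argue by induction on $b$, the base $b=0$ being trivial. For the inductive step, $\partial^{b-1}\mathcal{A}$ is itself an intersecting $(a+b-1)$-uniform hypergraph (any two of its members dominate members of $\mathcal{A}$ that must meet), so applying the single-step version of the lemma to $\partial^{b-1}\mathcal{A}$ and chaining with the inductive hypothesis at depth $b-1$ yields the bound at depth $b$.

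The main technical obstacle is the single-step base case $b=1$: local EKR on an $(a+1)$-set is still vacuous for $a\ge 2$, so the bound must be extracted globally. I would handle this via left-compression: shifting preserves $|\mathcal{A}|$ and the intersecting property and does not increase $|\partial^{1}\mathcal{A}|$, so one may assume $\mathcal{A}$ is shifted and then verify the inequality directly by stratifying $\mathcal{A}$ according to whether a fixed pivot element lies in each set, applying EKR to the stratum missing the pivot.

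For the equality characterization when $2a<n$ and $b\ge 1$, I would trace tightness backward through the argument: equality in the EKR step forces $\mathcal{A}_{B}$ to be a full $1$-star inside every $B\in\partial^{b}\mathcal{A}$, and a compatibility check on the centers across different $B$'s forces a single common center, so $\mathcal{A}$ itself is a full $1$-star on $[n]$. The exceptional case $|\mathcal{A}|=1$ follows by direct substitution, which gives equality iff $\binom{a+b-1}{a-1}=1$, i.e.\ $b=0$ whenever $a\ge 2$.
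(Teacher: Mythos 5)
Your double-counting reformulation is correct, and in the regime $b\geq a$ it gives a clean and genuinely more elementary argument than the paper's: the paper instead complements $\mathcal{A}$ to an $(n-a)$-uniform family, writes $|\mathcal{A}|=\binom{x}{n-a}$ with $x\leq n-1$ (this is the only place the intersecting hypothesis enters, via Erd\H{o}s--Ko--Rado at level $a$), and applies the Kruskal--Katona theorem to the lower shadow of the complement. However, your argument has a genuine gap in the regime $b<a$. The inductive step invokes a ``single-step version of the lemma'' for the intersecting $(a+b-1)$-uniform family $\partial^{b-1}\mathcal{A}$, but that statement carries the hypothesis $2(a+b-1)\leq n$, which need not hold under the stated assumptions $a+b\leq n$, $2a\leq n$ (e.g.\ $n=6$, $a=3$, $b=2$). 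Worse, the single-step inequality is simply false for general intersecting $r$-uniform families when $2r>n$: every $\mathcal{B}\subseteq\binom{[n]}{r}$ is then intersecting, and $\mathcal{B}=\binom{[n]}{r}$ gives $|\partial^{1}\mathcal{B}|/\binom{n-1}{r}=n/(r+1)<n/r=|\mathcal{B}|/\binom{n-1}{r-1}$. So the chaining does not close; what you would still need to exploit is precisely that $\partial^{b-1}\mathcal{A}$ is the upper shadow of an intersecting $a$-uniform family with $2a\leq n$, not merely intersecting at its own level --- and this is exactly the global information the paper's proof propagates through all levels at once via the single Kruskal--Katona parameter $x\leq n-1$.

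The same difficulty already sits inside your base case $b=1$ for $a\geq 2$: locally, $\mathcal{A}_{B}$ can be all of $\binom{B}{a}$ for an $(a+1)$-set $B$, so no useful local bound is available, and the compression-plus-stratification step is only sketched, not carried out. Note also that your equality analysis via uniqueness in EKR inside each $B$ needs $a<b$ strictly (for $a=b$ the maximum intersecting subfamilies of $\binom{B}{a}$ are not only stars), whereas the paper reads off the equality condition from the case $|\mathcal{A}|=\binom{n-1}{a-1}$ directly. I would either restrict the double-counting proof to $b\geq a$ and treat $b<a$ by the complementation--Kruskal--Katona route, or adopt the paper's argument wholesale.
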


\begin{proof}
Let $\overline{\mathcal{A}}=\{[n]\setminus A: A\in \mathcal{A}\}.$ Thus, $\overline{\mathcal{A}}\subseteq {[n] \choose n-a}$ with $|\overline{\mathcal{A}}|=|\mathcal{A}|$ and $\overline{\partial^b \mathcal{A}}=\delta_{n-a-b}(\overline{\mathcal{A}}).$ Let $n-a\leq x\leq n$ be the real such that $|\overline{\mathcal{A}}|={x\choose n-a}$. Since $\mathcal{A}$ is an intersecting family, we have $|\mathcal{A}|\leq {n-1\choose a-1}$ by Erd\H{o}s-Ko-Rado theorem. This implies that $x\leq n-1$. Then, by the Kruskal-Katona theorem \cite{LL1993}, we have
\begin{equation*}
|\partial^b \mathcal{A}|= |\delta_{n-a-b}(\overline{\mathcal{A}})|\geq {x\choose n-a-b},
\end{equation*}
and consequently,
\begin{equation*}
\frac{|\partial^b \mathcal{A}|}{|\mathcal{A}|}\geq \frac{{x \choose n-a-b}}{{x \choose n-a}}\geq \frac{{n-1 \choose n-a-b}}{{n-1 \choose n-a}}=\frac{{n-1 \choose a+b-1}}{{n-1 \choose a-1}}.
\end{equation*}
The second inequality follows from that $\frac{{x \choose n-a-b}}{{x \choose n-a}}$ is a decreasing function and $x\leq n-1$.
Equality holds if and only if $|\mathcal{A}|={n-1 \choose a-1}$ or $b=0$, which is equivalent to that $\mathcal{A}$ is a full $1$-star or $b=0$.

When $\mathcal{A}$ consists of only one $a$-set, we have $|\partial^b \mathcal{A}|={n-a \choose b}$ and the equality holds if and only if $b=0$.
\end{proof}

Based on the correspondence between exterior algebra and hypergraphs, we can prove a local LYM-inequality for real self-annihilating subspaces.

For two subspaces $U,W\subseteq \bigwedge V$, define
\begin{equation*}
U\wedge W:=\text{span}\{u\wedge w:u\in U,w\in W\}.
\end{equation*}
Fix a matrix $F\in GL_n(\mathbb{R})$. For a monomial subspace $W(F,\mathcal{A})\subseteq\bigwedge^r V$ with respect to $F$ for some hypergraph $\mathcal{A} \subseteq {[n]\choose r}$,  we have
\begin{equation*}
W(F,\mathcal{A})\wedge {\bigwedge}^c V=\text{span}\left\{f_A\wedge f_J:A\in \mathcal{A},J\in {[n]\choose c}\right\}.
\end{equation*}
By Eq. (\ref{corr_key}), $W(F,\mathcal{A})\wedge \bigwedge^c V=W(F,\partial^{c}\mathcal{A})$. For a general subspace $W\subseteq \bigwedge^{r}V$, we have the following containment,
\begin{equation}\label{containment}
\mathcal{H}_F\left(W\wedge {\bigwedge}^c V\right) \supseteq \left\{A\cup J:A\in \mathcal{H}_F(W),J\in {[n]\setminus A \choose c}\right\}=\partial^c(\mathcal{H}_F(W)).
\end{equation}
This enables Scott and Wilmer \cite{SW2019} to prove a local LYM bound for subspaces

\begin{equation*}
\frac{\dim(W\wedge \bigwedge^c V)}{{n \choose r+c}}\geq \frac{\dim(W)}{{n\choose r}}.
\end{equation*}
We extend it to self-annihilating subspaces.

\begin{theorem}\label{local_LYM_int_exalg}
Let $V=\mathbb{R}^n$, and $W\subseteq \bigwedge^r V$ be a self-annihilating subspace with $0<2r\leq n$. Then for $0\leq c\leq n-r$,
\begin{equation}\label{local_LYM_int_subspace}
\frac{\dim(W\wedge \bigwedge^c V)}{{n-1 \choose r+c-1}}\geq \frac{\dim(W)}{{n-1\choose r-1}}.
\end{equation}
When $2r<n$, equality holds only if there exists an $F\in GL_n(\mathbb{R})$ such that $\mathcal{H}_F(W)$ is a full $1$-star or $c=0$. Particularly when $\dim(W)=1$, equality holds only if $c=0$.
\end{theorem}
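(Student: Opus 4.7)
The plan is to combine the Scott--Wilmer correspondence with the intersecting Local LYM inequality (Lemma \ref{local_LYM_int}) to lift the hypergraph bound to subspaces, mirroring how Scott and Wilmer lifted the classical Local LYM bound to arbitrary subspaces of $\bigwedge^r V$. The whole argument is essentially an assembly of the machinery already developed in the preliminaries, so I would keep it short and direct.

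First, I would fix an arbitrary $F \in GL_n(\mathbb{R})$ and set $\mathcal{A} := \mathcal{H}_F(W) \subseteq \binom{[n]}{r}$. Since $W$ is self-annihilating and $2r \leq n$, Theorem \ref{selfann} gives that $\mathcal{A}$ is an intersecting $r$-uniform hypergraph, and Lemma \ref{corr5}(i) gives $|\mathcal{A}| = \dim(W)$. Next, from the containment \eqref{containment} and Lemma \ref{corr5}(i) applied to $W \wedge \bigwedge^c V$, I get
\begin{equation*}
\dim\!\left(W \wedge \bigwedge^c V\right) \;=\; \left|\mathcal{H}_F\!\left(W \wedge \bigwedge^c V\right)\right| \;\geq\; |\partial^c \mathcal{A}|.
\end{equation*}
Now applying Lemma \ref{local_LYM_int} to the intersecting hypergraph $\mathcal{A}$ (whose hypotheses $a+b \leq n$ and $2a \leq n$ translate here to $r+c \leq n$ and $2r \leq n$, both assumed) yields
\begin{equation*}
\frac{|\partial^c \mathcal{A}|}{\binom{n-1}{r+c-1}} \;\geq\; \frac{|\mathcal{A}|}{\binom{n-1}{r-1}} \;=\; \frac{\dim(W)}{\binom{n-1}{r-1}},
\end{equation*}
and chaining the two inequalities finishes the inequality \eqref{local_LYM_int_subspace}.

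For the equality case when $2r < n$, equality in \eqref{local_LYM_int_subspace} forces both links in the chain to be tight. In particular, the Local LYM step must be tight for $\mathcal{A} = \mathcal{H}_F(W)$, so the equality clause of Lemma \ref{local_LYM_int} applies and forces $\mathcal{H}_F(W)$ to be a full $1$-star or $c = 0$. Since $F$ was arbitrary, this proves the existence clause. For $\dim(W) = 1$, $\mathcal{A}$ is a single $r$-set, and the last sentence of Lemma \ref{local_LYM_int} forces $c = 0$.

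I do not anticipate a real obstacle: the nontrivial content has been pushed into Theorem \ref{selfann}, Lemma \ref{local_LYM_int}, and the containment \eqref{containment}. The only place that needs a bit of care is making sure the Local LYM equality case transfers cleanly through the $\dim \geq |\partial^c \mathcal{A}|$ step, which it does because equality in the composite inequality forces equality in each factor.
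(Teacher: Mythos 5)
Your proposal is correct and follows essentially the same route as the paper's own proof: fix an arbitrary $F$, use the containment \eqref{containment} together with Lemma \ref{corr5}(i) to bound $\dim(W\wedge\bigwedge^c V)$ below by $|\partial^c(\mathcal{H}_F(W))|$, and then apply the intersecting Local LYM inequality (Lemma \ref{local_LYM_int}) to $\mathcal{H}_F(W)$, with the equality discussion inherited from that lemma. Your version is if anything slightly more careful, since you explicitly invoke Theorem \ref{selfann} to justify that $\mathcal{H}_F(W)$ is intersecting before applying Lemma \ref{local_LYM_int}, a step the paper leaves implicit.
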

\begin{proof}
Given $F\in GL_n(\mathbb{R})$. By Eq. (\ref{containment}) and Lemma~\ref{local_LYM_int}, we have
\begin{equation*}
\frac{\dim(W\wedge \bigwedge^c V)}{{n-1 \choose r+c-1}}=\frac{|\mathcal{H}_F(W\wedge \bigwedge ^c V)|}{{n-1 \choose r+c-1}}\geq \frac{|\partial ^c (\mathcal{H}_F(W))|}{{n-1 \choose r+c-1}}\geq  \frac{|\mathcal{H}_F(W)|}{{n-1\choose r-1}}= \frac{\dim(W)}{{n-1\choose r-1}}.
\end{equation*}

Noted that the equality in (\ref{local_LYM_int_subspace}) holds if and only if both two equalities in the above inequality hold. Therefore, by Lemma~\ref{local_LYM_int}, the equality in (\ref{local_LYM_int_subspace}) holds only if there exists an $F\in GL_n(\mathbb{R})$ such that $\mathcal{H}_F(W)$ is a full $1$-star or $c=0$. When $\dim(W)=1$, equality holds only if $c=0$.
\end{proof}

\section{Proofs of main results}
In this section, we prove our main results Theorems~\ref{hemi_bundle_main},~\ref{setint} and~\ref{stab}.

\subsection{Hemi-bundled Bollob\'{a}s-Type Theorems}

The idea of the proof of Theorem~\ref{hemi_bundle_main} is inspired by the proof of Theorem 4.5 in \cite{SW2019}. Firstly, by recursively constructing a sequence of self-annihilating subspaces $Z_i\subseteq \bigwedge^{a_i} V$, one can encode the intersection structure among $A_i$s and the intersection structure between $A_i$s and $B_j$s both in $Z_i$. Then, using the local LYM-inequality for self-annihilating subspaces, the dimension of $Z_i$ can be well-controlled as $a_i$ varies. Finally, the result follows from Theorem~\ref{selfann}.

A little different from the case for set pairs, to deal with pairs of subspaces, we also need F\"{u}redi's general position arguments \cite{Furedi1984}.
\begin{lemma}\label{spacegp}\cite{Furedi1984}
Let $V=\mathbb{R}^{n}$, and let $U_1,\ldots,U_m$ be proper subspaces of $V$. Then there exists a $k$-dimensional subspace $V'$ such that
\begin{equation*}
\dim(U_i\cap V')=\max\{\dim(U_i)+k-n,0\}
\end{equation*}
holds for every $1\leq i\leq m$.
\end{lemma}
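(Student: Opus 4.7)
The plan is to prove the lemma by induction on $k$, building $V'$ one basis vector at a time. The base case $k=0$ is immediate: take $V'=\{0\}$ and note that $\dim(U_i)\leq n-1$ (since each $U_i$ is proper) forces $\max\{\dim(U_i)-n,0\}=0$, which matches $\dim(U_i\cap\{0\})=0$.

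For the inductive step, suppose a $(k-1)$-dimensional subspace $V'$ satisfying $\dim(U_i\cap V')=\max\{\dim(U_i)+k-1-n,0\}$ has already been constructed. I seek a vector $v\in V\setminus V'$ such that $V''=V'+\langle v\rangle$ has dimension $k$ and still satisfies the target formula. Since passing from $V'$ to $V''$ raises $\dim(U_i\cap \,\cdot\,)$ by at most one, the task is to arrange which indices experience growth. I split the indices into two groups based on the target formula: in Group~I one has $\dim(U_i)+k\leq n$, so the target intersection dimension is $0$ and no growth is desired; in Group~II one has $\dim(U_i)+k\geq n+1$, so the target exceeds the inductive value by exactly one and a growth of one is required.

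The central observation is a short linear algebra computation: provided $v\notin V'$, the intersection $\dim(U_i\cap V'')$ strictly exceeds $\dim(U_i\cap V')$ if and only if $v\in U_i+V'$. For a Group~II index, the modular dimension formula yields $\dim(U_i+V')=\dim(U_i)+(k-1)-(\dim(U_i)+k-1-n)=n$, so $U_i+V'=V$ and the desired growth happens automatically for every choice of $v$. For a Group~I index, $\dim(U_i+V')=\dim(U_i)+k-1\leq n-1$, so $U_i+V'$ is a proper subspace of $V$ that $v$ must avoid.

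Putting this together, it suffices to choose $v$ in the complement of the finite union $V'\cup\bigcup_{i\in\text{Group I}}(U_i+V')$, which is a finite union of proper subspaces of $\mathbb{R}^n$. Since $\mathbb{R}$ is infinite, such a union cannot exhaust $\mathbb{R}^n$, so a valid $v$ exists and the induction closes. There is no serious obstacle; the only point requiring some care is the case split and the verification that an increase of $\dim(U_i\cap V'')$ over $\dim(U_i\cap V')$ is equivalent to $v\in U_i+V'$ (together with the clause $v\notin V'$ that guarantees $\dim V''=k$).
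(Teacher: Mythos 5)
Your proof is correct and complete. Note that the paper itself does not prove this lemma at all --- it is imported verbatim from F\"uredi's 1984 paper with only a citation --- so there is no in-paper argument to compare against. F\"uredi's original justification is a genericity (``general position'') argument: a generic $k$-dimensional subspace meets each $U_i$ in the expected dimension $\max\{\dim(U_i)+k-n,0\}$. Your version replaces that with an explicit greedy induction, adding one vector at a time and avoiding the finite union of proper subspaces $V'\cup\bigcup_{i\in\mathrm{I}}(U_i+V')$, which works over any infinite field. The key equivalence you isolate --- for $v\notin V'$, the intersection $\dim(U_i\cap(V'+\langle v\rangle))$ grows iff $v\in U_i+V'$ --- is verified correctly in both directions, and the dimension count $\dim(U_i+V')=n$ for Group~II indices (so that growth is automatic) and $\dim(U_i+V')\leq n-1$ for Group~I indices (so that $U_i+V'$ is avoidable) is exactly right. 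Your argument is, if anything, more elementary and self-contained than the cited source.
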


Now, we present the proof of Theorem~\ref{hemi_bundle_main}. For  convenience, we restate Theorem~\ref{hemi_bundle_main} as follows.

\begin{theorem}
Let $\{(A_i,B_i)\}_{i=1}^m$ be a sequence of pairs of subspaces of a finite dimensional real vector space, such that $\dim(A_i)=a_i$ and $\dim(B_i)=b_i$ with $a_i < b_i$ for every $1\leq i\leq m$. Suppose that for some $t\geq0$,
\begin{enumerate}[(i)]
    \item $\dim(A_i\cap A_j) > t$ for all $1\leq i,j\leq m,$
    \item $\dim(A_i\cap B_i) \leq t$ for all $1\leq i\leq m,$
    \item $\dim(A_i\cap B_j) > t$ for all $1\leq i<j \leq m.$
\end{enumerate}
Further if $a_1\leq a_2\leq \ldots\leq a_m$, and for all $1\leq i \leq m$, $a_i+b_i = N$ for some positive integer $N$,
then
\begin{equation}\label{weighted_main1}
\sum_{i=1}^{m}{{N-(2t+1)\choose a_i-(t+1)}}^{-1}\leq 1.
\end{equation}
Equality holds only if $a_1=a_2=\ldots =a_m$ and $b_1=b_2=\ldots =b_m$.
\end{theorem}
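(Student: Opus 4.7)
The plan is to combine three ingredients that are already on the table: F\"{u}redi's general-position lemma (Lemma \ref{spacegp}) to convert the threshold hypotheses ``$\leq t$'' and ``$> t$'' into clean $0$ versus nonzero statements; the exterior-algebra encoding used by Lov\'{a}sz, F\"{u}redi and Scott--Wilmer so that each pair $(A_i,B_i)$ is represented by wedge products whose annihilation pattern mirrors the cross-intersection pattern; and the self-annihilating Local LYM bound from Theorem \ref{local_LYM_int_exalg}, which is where the $(t{+}1)$-intersecting hypothesis on $\{A_i\}$ is absorbed.

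First I would reduce from the threshold version to the ``$t=0$'' version. Apply Lemma \ref{spacegp} to the finite collection of subspaces $\{A_i\},\{B_i\},\{A_i\cap B_j\},\{A_i\cap A_j\}$ to obtain a generic subspace $V'\subseteq\mathbb{R}^n$ of codimension $t$ meeting every listed subspace in the expected dimension. Setting $A_i':=A_i\cap V'$ and $B_i':=B_i\cap V'$, one gets $\dim(A_i')=a_i-t$, $\dim(B_i')=b_i-t$, $A_i'\cap B_i'=\{0\}$ (because $\dim(A_i\cap B_i)\leq t$), $\dim(A_i'\cap B_j')\geq 1$ for $i<j$, and $\dim(A_i'\cap A_j')\geq 1$ for all $i,j$. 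In other words, the problem in $V'$ has $t=0$, ambient dimension $N-2t$ (after passing to the span if needed), pair-sizes $(a_i-t,\,b_i-t)$ summing to $N-2t$, and the family $\{A_i'\}$ is intersecting in the subspace sense.

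Second I would carry out the exterior-algebra construction. Pick a generic basis $F\in GL_{N-2t}(\mathbb{R})$ and for each $i$ choose a nonzero top form $\beta_i\in\bigwedge^{b_i-t}B_i'$. The Bollob\'{a}s--Lov\'{a}sz argument, combined with the ordering $a_1\leq\cdots\leq a_m$ and the cross-conditions $\dim(A_i'\cap B_j')\geq 1$ for $i<j$, will yield wedge vectors $\alpha_i\in\bigwedge^{a_i-t}A_i'$ for which $\alpha_i\wedge\beta_i\neq 0$ while $\alpha_i\wedge\beta_j=0$ for $j<i$; in the usual way this is the triangular relation that separates the pairs. To install the EKR-style saving, take for each $i$ a fixed nonzero $v_i\in A_i'$ (the ``star element'' for $A_i'$) and consider $\widetilde\alpha_i:=\alpha_i$ together with the subspace $W:=\mathrm{span}\{\widetilde\alpha_1,\ldots,\widetilde\alpha_m\}\subseteq\bigwedge A_*'$; the 1-intersecting property of $\{A_i'\}$ forces $\widetilde\alpha_i\wedge\widetilde\alpha_j=0$, so that (after the Scott--Wilmer initial-set bookkeeping used in Theorem \ref{weighted_space} to accommodate non-uniform degrees) $W$ behaves as a self-annihilating object. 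Applying Theorem \ref{local_LYM_int_exalg}, together with the weighted aggregation used in Theorem \ref{weighted_space}, should then yield $\sum_{i=1}^{m}\binom{N-2t-1}{a_i-t-1}^{-1}\leq 1$; the $-1$ in the top of the binomial coefficient is precisely the EKR-style saving produced by Theorem \ref{local_LYM_int_exalg} over the ordinary Local LYM bound, which matches the required right-hand side because $\binom{N-2t-1}{a_i-t-1}=\binom{(a_i-t-1)+(b_i-t)}{a_i-t-1}$.

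Finally, for the equality clause (assuming $a_i<b_i$, so that $2(a_i-t)<N-2t$ and the strict part of Theorem \ref{local_LYM_int_exalg} is available), I would trace the equality conditions back through the chain: tightness in the weighted LLYM step forces each invocation of Theorem \ref{local_LYM_int_exalg} to be tight, and in particular forces the exterior degrees $a_i-t$ to all coincide, hence $a_1=\cdots=a_m$, and then $b_1=\cdots=b_m$ from $a_i+b_i=N$. The step I expect to be the main obstacle is the exterior-algebra construction in the middle paragraph: making the self-annihilating subspace $W$ sit correctly in a single graded piece when the $a_i$ are unequal, while simultaneously preserving the Bollob\'{a}s triangular structure against the $\beta_j$'s. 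Adapting the initial-set/reverse-colex machinery of Scott--Wilmer to the hemi-bundled setting is where the real work will lie; the general-position reduction and the final equality analysis should then be routine.
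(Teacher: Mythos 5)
Your outline follows the same route as the paper---a general-position reduction to $t=0$ via Lemma \ref{spacegp}, then an exterior-algebra argument closed out by the self-annihilating Local LYM bound of Theorem \ref{local_LYM_int_exalg}---but the step you flag as ``where the real work will lie'' is precisely the step you have not supplied, and it is the heart of the proof. The paper does not try to place all the $\wedge A_i'$ into one subspace at once and then patch up the grading; it builds the object recursively: $Z_0=\{\mathbf{0}\}$ and
\begin{equation*}
Z_{i+1}=\mathrm{span}\Bigl\{\,Z_i\wedge \bigwedge\nolimits^{a_{i+1}-a_i}V,\ \wedge A_{i+1}'\,\Bigr\}\subseteq \bigwedge\nolimits^{a_{i+1}-t}V,
\end{equation*}
so each stage sits in a single graded piece because the previous stage is wedged up by the full exterior power $\bigwedge^{a_{i+1}-a_i}V$. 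The intersecting hypothesis on $\{A_i'\}$ makes every $Z_i$ self-annihilating; the triangular relation against $\wedge B_{i+1}'$ shows $\wedge A_{i+1}'\notin Z_i\wedge\bigwedge^{a_{i+1}-a_i}V$, since every element of the latter annihilates $\wedge B_{i+1}'$ while $(\wedge A_{i+1}')\wedge(\wedge B_{i+1}')\neq\mathbf{0}$ when the ambient dimension equals $a_{i+1}'+b_{i+1}'$; hence $\dim Z_{i+1}=\dim\bigl(Z_i\wedge\bigwedge^{a_{i+1}-a_i}V\bigr)+1$, and Theorem \ref{local_LYM_int_exalg} controls the dimension growth of the wedge-up step. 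Telescoping these two facts and applying Theorem \ref{selfann} to $Z_m$ gives the weighted bound with the shifted binomials, and tracing equality (the one-dimensional case $Z_1$ versus the full-$1$-star alternative) forces $a_1=\cdots=a_m$. Without this recursion, invoking ``the weighted aggregation used in Theorem \ref{weighted_space}'' does not work as stated: that theorem has no intersecting hypothesis and produces the unshifted binomials $\binom{a_i+b_i}{a_i}$, not $\binom{N-2t-1}{a_i-t-1}$.

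A secondary inaccuracy in your reduction: after cutting with the generic codimension-$t$ subspace $V'$ you cannot simply ``pass to the span'' to reach ambient dimension $N-2t$, because the span of all the $A_i'\cup B_i'$ may be much larger than $N-2t$. The top-degree condition $\dim(\text{ambient})=a_i'+b_i'$ is what makes $(\wedge A_i')\wedge(\wedge B_i')$ a nonzero top form, and the paper achieves it by a second application of Lemma \ref{spacegp} followed by a generic linear projection onto a complement $Q$ of dimension exactly $N-2t$, checking that the projection preserves all the intersection and non-intersection conditions.
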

\begin{proof}
Denote $V=\mathbb{R}^{n}$. We first prove the special case that $t=0$ and $n=N$. For general cases, we will show that they can be reduced to this special case.

Now let $t=0$ and $n=N=a_i+b_i>2a_i $ for all $i=1,2,\ldots,m$. Fix an invertible matrix $F=(f_{ij})=(f_1|f_2|\cdots|f_n)\in GL_n(\mathbb{R})$, then $\{f_1,f_2,\ldots ,f_n\}$ is a basis of $V$. Following the notation of \cite{LB1992}, for a $k$-dimensional subspace $T\subseteq V$, we define $\bigwedge T\in \bigwedge^{k}V$ by selecting any basis $\{v_1,\ldots,v_k\}$ of $T$ and setting
\begin{equation*}
\bigwedge T=v_1\wedge v_2\wedge\ldots\wedge v_k.
\end{equation*}
Note that $\bigwedge T$ is unique up to a non-zero constant, and $\text{span}\{\bigwedge T\}$ is a well-defined one-dimensional subspace.

For each $i\in [m]$, denote
\begin{equation}\label{wedge_sp}
\tilde{A}_i=\bigwedge A_i\in{\bigwedge}^{a_i}V~\text{and}~
\tilde{B}_i=\bigwedge B_i\in{\bigwedge}^{b_i}V.
\end{equation}
Since  $n=a_i+b_i=\dim(A_i)+\dim(B_i)$, by the property of wedge products, we have
\begin{equation}\label{wedge_sp1}
\tilde{A}_i\wedge \tilde{B}_j
\begin{cases}
\neq \mathbf{0},~~\text{if}~i=j;\\
=\mathbf{0},~~\text{if}~i< j.
\end{cases}
\end{equation}

Now, we recursively construct a sequence of subspaces $Z_i\subseteq \bigwedge^{a_i} V$ by setting $Z_0=\{\mathbf{0}\}$ and
\begin{equation*}
Z_{i+1}=\text{span}\left\{Z_i\wedge {\bigwedge}^{a_{i+1}-a_i} V, \tilde{A}_{i+1}\right\},
\end{equation*}
for $0\leq i \leq m-1$. Since $\dim(A_i\cap A_j)>0$ for all $1\leq i,j\leq m$, thus, we know that $Z_{i+1}$, $0\leq i \leq m-1$ are self-annihilating subspaces.
Denote $Y_i$ as the subspace consisting of $Z_i$ wedged with $\bigwedge^{a_{i+1}-a_i} V$, that is,
\begin{equation*}
\begin{array}{ll}
Y_i= Z_i\wedge \bigwedge^{a_{i+1}-a_i} V \subseteq \bigwedge^{a_{i+1}} V, \text{ for }0\leq i\leq m-1.
\end{array}
\end{equation*}

We claim that for all $0\leq i\leq m-1$,
\begin{equation}\label{dim_relationship}
\dim(Z_{i+1})=\dim(Y_i)+1
\end{equation}
and
\begin{equation}\label{dim_inequality}
\frac{\dim(Y_i)}{{N-1 \choose a_{i+1}-1}}\geq \frac{\dim(Z_i)}{{N-1 \choose a_i-1}}.
\end{equation}
By the definition of $Z_{i+1}$, we know that
\begin{equation*}
\begin{array}{ll}
Z_{i+1}=\text{span}\{\tilde{A}_{i+1},Y_i\}.
\end{array}
\end{equation*}
On one hand, we have $\tilde{A}_{i+1}\wedge \tilde{B}_{i+1}\neq \mathbf{0}$.
On the other hand, for any $y\in Y_i=Z_i\wedge\bigwedge^{a_{i+1}-a_i} V$, we have $y\wedge \tilde{B}_{i+1}=\mathbf{0}$ because $Z_i\wedge\bigwedge^{a_{i+1}-a_i} V= \text{span}\{\tilde{A}_h\wedge\bigwedge^{a_{i+1}-a_{h}} V:h\leq i\}$ and $\tilde{A}_{h}\wedge \tilde{B}_{i+1}=\mathbf{0}$ for all $h\leq i$.
Therefore, we have $\tilde{A}_{i+1}\notin Y_i$, and (\ref{dim_relationship}) follows.
Consequently, (\ref{dim_inequality}) holds  by Theorem~\ref{local_LYM_int_exalg},
\begin{equation*}
\frac{\dim(Y_i)}{{N-1 \choose a_{i+1}-1}}=\frac{\dim(Z_i\wedge \bigwedge^{a_{i+1}-a_i}V)}{{N-1 \choose a_{i+1}-1}}\geq \frac{\dim(Z_i)}{{N-1 \choose a_i-1}},
\end{equation*}
and the equality holds only if $a_{i+1}=a_i$ or $\mathcal{H}_F(Z_i)$ is a full $1$-star.

Applying Theorem~\ref{selfann} with the self-annihilating property of $Z_m$, and combining (\ref{dim_relationship}) and (\ref{dim_inequality}),  we have
\begin{equation}\label{main_ineq1}
1\geq \frac{\dim(Z_m)}{{N-1 \choose a_m-1}}=\frac{1+\dim(Y_{m-1})}{{N-1 \choose a_m-1}}\geq \frac{1}{{N-1 \choose a_m-1}} + \frac{Z_{m-1}}{{N-1 \choose a_{m-1}-1}} =\cdots \geq \sum_{i=1}^m\frac{1}{{N-1 \choose a_i-1}}.
\end{equation}
This proves (\ref{weighted_main1}) when $t=0$ and $n=N$.

Now, consider the structure of $\{(A_i,B_i)\}_{1\leq i\leq m}$ when the equality in (\ref{main_ineq1}) holds. When $a_1=1$, we have $m=1$, which is the trivial case.
Assume that $a_1>1$. Since $Z_1= \text{span}\{\tilde{A}_1\}$, we know that $\dim(Z_1)=1$. Then by Theorem~\ref{local_LYM_int_exalg}, for $i=1$, the equality in (\ref{dim_inequality}) holds only if $a_2=a_1$. Assume that there exists some $1<s\leq m-1$ such that $a=a_1=\cdots=a_s<a_{s+1}$, then $\mathcal{H}_F(Z_s)$ is a full $1$-star of size ${N-1\choose a-1}$. On one hand, this indicates that
\begin{equation*}
1\geq \frac{\dim(Z_m)}{{N-1 \choose a_m-1}}\geq \sum_{i=s+1}^m\frac{1}{{N-1 \choose a_i-1}}+\frac{s}{{N-1\choose a-1}}.
\end{equation*}
On the other hand, we have $Z_s=\text{span}\{\tilde{A}_1,\ldots,\tilde{A}_s\}$ for $1\leq i\leq s$. By (\ref{dim_relationship}), we have $\dim(Z_s)=\dim(Z_1)+s-1=s$. This means $s=\dim(Z_s)=|\mathcal{H}_F(Z_s)|={N-1\choose a-1}$, which forces $s=m$.
Therefore, the equality in (\ref{main_ineq1}) holds only if $a_1=a_2=\cdots=a_m$ and $b_1=b_2=\cdots=b_m$.

This completes the proof of theorem for the special case $t=0$ and $n=N$.

 Now, we assume $t\geq 0$ and $n\geq N$. Let $n'=n-t$.  By Lemma~\ref{spacegp}, there exists an $n'$-dimensional subspace $V'$ of $V$, such that
\begin{itemize}
  \item $\dim(A_i\cap V')=a_i-t,~\dim(B_i\cap V')=b_i-t$ for all $1\leq i\leq m,$
  \item $\dim(A_i\cap A_j\cap V')>0$ for all $1\leq i,j\leq m,$
  \item $\dim(A_i\cap B_i\cap V')=0$ for all $1\leq i\leq m,$
  \item $\dim(A_i\cap B_j\cap V')>0$ for all $1\leq i<j\leq m.$
\end{itemize}
Write $a'_i=a_i-t$, $b'_i=b_i-t$ and $n''=n'-a'_i-b'_i=n'-(N-2t)$. Again, by Lemma~\ref{spacegp}, we can find an $n''$-dimensional subspace $V''$ of $V'$, such that $\dim((A_i\cap V')\cap V'')=0$ and $\dim((B_i\cap V')\cap V'')=0$ for all $i\in [m]$. Let $Q$ be the orthogonal subspace to $V''$ in $V'$, i.e., $V'=V''\oplus Q$.
We define a linear mapping $\phi: V'\rightarrow Q$ as
\begin{equation*}
\phi(a+b)=b, \text{ for every }(a,b)\in V'' \times Q,
\end{equation*}
the $\times$ symbol above means the Cartesian product. Here, we use the Cartesian product $V''\times Q$ to distinguish the direct sum $V''\oplus Q$. Write $A_i^{'}=\phi(A_i\cap V'),~B_i^{'}=\phi(A_i\cap V')$ for all $1\leq i\leq m.$ Then, the following holds
\begin{itemize}
  \item $\dim(A_i^{'})=a'_i,~\dim(B_i^{'})=b'_i$ for all $1\leq i\leq m,$
  \item $\dim(A_i^{'}\cap A_j^{'})>0$ for all $1\leq i,j\leq m,$
  \item $\dim(A_i^{'}\cap B_i^{'})=0$ for all $1\leq i\leq m,$
  \item $\dim(A_i^{'}\cap B_j^{'})>0$ for all $1\leq i<j\leq m.$
\end{itemize}

Therefore, we obtain a new sequence of pairs of subspaces $\{(A'_i,B'_i)\}_{i=1}^{m}$ of $Q$ satisfying all conditions of the theorem with $\dim(Q)=\dim(A'_i)+\dim(B'_i)= N-2t$. According to the former proof about the case $t=0$ and $n=N-2t=a'_i+b'_i$, we know that
\begin{equation*}
\sum_{i=1}^{m}\frac{1}{{(N-2t)-1\choose a'_i-1}}=\sum_{i=1}^{m}\frac{1}{{N-(2t+1)\choose a_i-(t+1)}}\leq 1.
\end{equation*}
Equality holds only if $a_1=\cdots=a_m$ and $b_1=\cdots=b_m$.
This completes the proof for general case.
\end{proof}

Note that the condition $a_i<b_i$ in Theorem~\ref{hemi_bundle_main} could be relaxed to $a_i\leq b_i$ if we don't consider the necessity when equality holds. As a consequence, we give a proof of Theorem~\ref{setint} by using a similar argument as that in \cite{Furedi1984}.

\begin{proof}[Proof of Theorem~\ref{setint}]

Let $X=\bigcup_{i=1}^{m} (A_i\cup B_i)$ be the ground set and assume that $|X|=n$. For each $x\in X$, assign a vector $v(x)\in\mathbb{R}^{n}$ to $x$ such that $\{v(x):x\in X\}$ forms a basis of $\mathbb{R}^{n}$.

Now, for $1\leq i\leq m$, define $\bar{A}_i$ as the $a_i$-dimensional subspace of $\mathbb{R}^{n}$ spanned by $\{v(x):x\in A_i\}$ and $\bar{B}_i$ as the $b_i$-dimensional subspace of $\mathbb{R}^{n}$ spanned by $\{v(x):x\in B_i\}$. Therefore, the intersecting restriction of $\{A_i\}_{i=1}^{m}$ and cross-intersecting restriction of ${(A_i,B_j)}_{1\leq i\leq j\leq m}$ indicate that $\{(\bar{A}_i,\bar{B}_i)\}_{i=1}^m$ is a collection of subspaces satisfying the conditions in Theorem~\ref{hemi_bundle_main}. Thus,

\begin{equation*}
\sum_{i=1}^{m}\frac{1}{{N-(2t+1)\choose a_i-(t+1)}}\leq 1.
\end{equation*}
\end{proof}

\subsection{Stability Results}

Based on the correspondence between exterior algebra and hypergraphs, we prove Theorem~\ref{stab} in this subsection using the stability result of the famous Erd\H{o}s-Ko-Rado theorem \cite{EKR1961}.

\begin{proof}[Proof of Theorem~\ref{stab}]
Let $X=\bigcup_{i=1}^{m} (A_i\cup B_i)$ be the ground set with cardinality $n$. For each $x\in X$, assign a vector $v(x)\in V=\mathbb{R}^{a+b}$ to $x$ such that these vectors $\{v(x):x\in X\}$ are in general position, i.e., every $a+b$ of these vectors are linearly independent. Similarly, for each $1\leq i\leq m$, define $\bar{A}_i$ and $\bar{B}_i$ as the subspaces spanned by $\{v(x):x\in A_i\}$ and $\{v(x):x\in B_i\}$, respectively.

Let $\{e_1,e_2,\ldots,e_{a+b}\}$ be the standard basis for $\mathbb{R}^{a+b}$.  Without loss of generality, we can assume $X=[n]$, $A_1=\{b+1,b+2,\ldots,a+b\}$, $B_1=\{1,2,\ldots,b\}$ and the assignment above satisfies $v(x)=e_x$, for each $x\in[a+b]$. Thus, we have $\bar{A}_1=\text{span}\{e_{b+1},e_{b+2},\ldots,e_{a+b}\}$, and $\bar{B}_{1}=\text{span}\{e_{1},e_{2},\ldots,e_{b}\}$.

Since $\{A_i\}_{i=1}^{m}$ is an intersecting family, we have
\begin{equation*}
W=\text{span}\{\bigwedge\bar{A}_1,\bigwedge\bar{A}_2,\ldots,\bigwedge\bar{A}_m\}
\end{equation*}
is a self-annihilating subspace of $\bigwedge^{a} V$.
By the property of wedge products, we have
\begin{equation}\label{wedge_sp1}
(\bigwedge\bar{A}_i)\wedge (\bigwedge\bar{B}_j)
\begin{cases}
\neq \mathbf{0},~~\text{if}~i=j;\\
=\mathbf{0},~~\text{if}~i< j.
\end{cases}
\end{equation}
By the 3rd version of the Triangular Criterion (see Proposition 2.9 in \cite{LB1992}), we know that $\bigwedge\bar{A}_1,$ $\bigwedge\bar{A}_2,$ $\ldots,$ $\bigwedge\bar{A}_m$ are linearly independent in $\bigwedge^{a}V$. Therefore, $\dim(W)=m$.
Let $F=(e_1|e_2|\cdots|e_{a+b})\in GL_{a+b}(\mathbb{R})$.  Then, by Lemma~\ref{corr5}, we have
\begin{equation*}
|\mathcal{H}_{F}(W)|=\dim(W)={a+b-1\choose a-1}.
\end{equation*}
By Theorem~\ref{selfann},  $\mathcal{H}_{F}(W)\subseteq{[a+b]\choose a}$ is an intersecting family.
 Since $a<b$, the extremal case of the Erd\H{o}s-Ko-Rado theorem implies  that $\mathcal{H}_{F}(W)$ must be  a full $1$-star.

By our assumption above, we know that $f_{A_1}=\wedge_{i=b+1}^{a+b}e_{i}=\bigwedge\bar{A}_1\in W$ and $\text{ins}(f_{A_1})=A_1\in \mathcal{H}_{F}(W)$. Since $\mathcal{H}_{F}(W)$ is a full $1$-star in $[a+b]$, there exists an element $x\in[a]$ such that for every $w\in W$, $b+x\in \text{ins}(w)$.

Now, we claim that for $2\leq i\leq m$, $b+x\in A_i$. Suppose not, there exists an $i_0$, $2\leq i_0\leq m$, such that $b+x\notin A_{i_0}$. Assume that $A_1\cap A_{i_0}=\{b+i_1,\ldots,b+i_k\}$, where $\emptyset \neq \{i_1,\ldots,i_k\}\subsetneq[a]$. Thus, $x\notin \{i_1,\ldots,i_k\}$. By the definition of $\bar{A}_{i_0}$, we can assume that $\bar{A}_{i_0}=\text{span}\{e_{b+i_1},\ldots,e_{b+i_k},v_{k+1}\ldots,v_a\}$ for some vectors $v_j\in\mathbb{R}^{a+b}\setminus\{e_{b+1},e_{b+2},\ldots,e_{a+b}\}$, $j=k+1,\ldots,a$. Consider $\bigwedge\bar{A}_{i_0}\in W$, it can be expanded as
\begin{equation}\label{stab_eq1}
\bigwedge\bar{A}_{i_0}=(\wedge_{j=1}^{k}e_{b+i_j})\wedge(\wedge_{j=k+1}^{a}v_j)=\sum_{C\in{[a+b]\choose a}}m_Cf_C.
\end{equation}
Thus, by the definition of $f_C$, for each $C$ satisfying $m_C\neq 0$ in (\ref{stab_eq1}), $\{b+i_1,\ldots,b+i_k\}\subseteq C$. Moreover, since ${b+x}\in \text{ins}(\bigwedge\bar{A}_{i_0})$, by the property of reverse colex order, for each $C$ satisfying $m_C\neq 0$ in (\ref{stab_eq1}), we also have
\begin{equation}\label{stab_eq2}
C\cap (\{b+x,b+x+1,\ldots,a+b\}\setminus\{b+i_1,\ldots,b+i_k\})\neq\emptyset.
\end{equation}
Otherwise, let $D(x)=\{b+x,b+x+1,\ldots,a+b\}\setminus\{b+i_1,\ldots,b+i_k\}$ and assume that there exists a subset $C_0\in{[a+b]\choose a}$ such that $C_0\cap D(x)=\emptyset$. Then, for every $C\cap D(x)\neq\emptyset$, $C_0>C$ under the reverse colex order. This contradicts the assumption that ${b+x}\in \text{ins}(\bigwedge\bar{A}_{i_0})$. Thus, we have
\begin{equation*}
(\wedge_{i\in[a]\setminus\{i_1,\ldots,i_k\}}e_{b+i})\wedge(\bigwedge\bar{A}_{i_0})=(\wedge_{i\in[a]\setminus\{i_1,\ldots,i_k\}}e_{b+i})\wedge(\sum_{C\in{[a+b]\choose a}}m_Cf_C)=\mathbf{0}.
\end{equation*}
This implies that the $2a-k$ vectors ${e_{b+1},\ldots,e_{a+b},v_{k+1},\ldots,v_a}$ in $\mathbb{R}^{a+b}$ are linearly dependent, which contradicts the assumption that every $a+b$ vectors of $\{v(x):x\in X\}$ are linearly independent. Thus, for each $2\leq i\leq m$, $b+x\in A_i$.

Finally, consider the collection of pairs of sets $\{(A_i\setminus\{b+x\},B_i)\}_{i=1}^m$. Since $A_i\cap B_i=\emptyset$, we know that for each $i\in[m]$, $b+x\notin B_i$. Therefore, $\{(A_i\setminus\{b+x\},B_i)\}_{i=1}^m$ inherit the cross-intersecting property of $\{(A_i,B_i)\}_{i=1}^m$. By Theorem~\ref{ori_Bollobas}, we know that $m={{a+b-1}\choose a-1}$ if and only if there is some ground set $X$ of cardinality $a+b-1$ such that $A_i$s are all subsets of $X$ of size $a$ and $B_i=X\setminus A_i$ for each $i$.

This completes the proof.
\end{proof}

\section{Conclusion}

In this paper, using exterior algebra methods, we prove a hemi-bundled weighted version of Bollob\'as theorem for finite dimensional real spaces. As a consequence of our result, a conjecture of Gerbner \emph{et al.} \cite{GKMNPTX2019} is settled. Moreover, we also determine the only extremal structure about the primary case of our hemi-bundled Bollob\'as theorem for finite sets.

For further research, in \cite{GKMNPTX2019}, Gerbner \emph{et al.} also proposed another conjecture of the following form.

\begin{conjecture}\label{conj_t_int}\cite{GKMNPTX2019}
Let $AK(n,k,t)$ denote the maximum size of a $k$-uniform $t$-intersecting family $F\subseteq {[n]\choose k}$. Let $\{(A_i,B_i)\}_{i=1}^m$ be a collection of pairs of sets such that for all $i\in [m]$, $|A_i|=a\leq|B_i|=b$. Suppose that for some $t\geq 0$,
\begin{itemize}

    \item $|A_i\cap A_j|\geq t$ for all $1\leq i,j\leq m,$
    \item $|A_i\cap B_i|=0$ for all $1\leq i\leq m,$
    \item $|A_i\cap B_j|>0$ for all $1\leq i\neq j \leq m.$
\end{itemize}
Then
\begin{equation}\label{bounds_conj}
m\leq AK(a+b,a,t).
\end{equation}
\end{conjecture}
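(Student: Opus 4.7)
The plan is to graft the exterior algebra machinery developed in Section~2 onto the Ahlswede--Khachatrian theorem for $t$-intersecting families. Let $X=\bigcup_{i=1}^{m}(A_i\cup B_i)$ and, mimicking the proof of Theorem~\ref{stab}, assign to each $x\in X$ a vector $v(x)\in\mathbb{R}^{a+b}$ in general position, so that any $a+b$ of these vectors are linearly independent. Set $\bar{A}_i=\mathrm{span}\{v(x):x\in A_i\}$ and $\bar{B}_i=\mathrm{span}\{v(x):x\in B_i\}$, which have dimensions $a$ and $b$ respectively. Since $|A_i\cup B_j|\leq a+b$ and $|A_i\cup A_j|\leq 2a\leq a+b$, general position translates set intersections directly into dimensions: $\dim(\bar{A}_i\cap \bar{B}_j)=|A_i\cap B_j|$ and $\dim(\bar{A}_i\cap \bar{A}_j)=|A_i\cap A_j|\geq t$. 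Let $W=\mathrm{span}\{\wedge \bar{A}_1,\dots,\wedge \bar{A}_m\}\subseteq \bigwedge^{a}\mathbb{R}^{a+b}$. The cross-intersection conditions then yield $(\wedge \bar{A}_i)\wedge(\wedge \bar{B}_j)\neq\mathbf{0}$ if and only if $i=j$, so the Triangular Criterion (Proposition~2.9 of \cite{LB1992}), used exactly as in the proof of Theorem~\ref{stab}, forces $\wedge \bar{A}_1,\dots,\wedge \bar{A}_m$ to be linearly independent and hence $\dim(W)=m$.

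Now fix $F\in GL_{a+b}(\mathbb{R})$ and consider the initial hypergraph $\mathcal{H}_F(W)\subseteq\binom{[a+b]}{a}$. Lemma~\ref{corr5} gives $|\mathcal{H}_F(W)|=\dim(W)=m$, so it would suffice to exhibit some $F$ for which $\mathcal{H}_F(W)$ is $t$-intersecting: the Ahlswede--Khachatrian theorem would then deliver $m=|\mathcal{H}_F(W)|\leq AK(a+b,a,t)$. This is the precise $t$-intersecting analogue of how Theorem~\ref{selfann} (the $t=1$ case) is used throughout the paper. In concrete terms, one wants to promote the pairwise condition $\dim(\bar{A}_i\cap\bar{A}_j)\geq t$ on the generators of $W$ into the global statement $|ins_{F}(u)\cap ins_{F}(v)|\geq t$ for every pair of nonzero $u,v\in W$ and a suitable choice of $F$.

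This promotion is the main obstacle. For $t=1$ it is driven by the observation that $u\wedge v=\mathbf{0}$ forces the top reverse-colex term $\pm f_{ins_{F}(u)\cup ins_{F}(v)}$ of $u\wedge v$ to vanish, which requires $ins_{F}(u)\cap ins_{F}(v)\neq\emptyset$. For $t\geq 2$ one needs a genuinely higher-order vanishing statement; a plausible attempt is to show that suitable iterated contractions of $u\wedge v$ against the dual basis of $F$ all vanish, which would force $ins_{F}(u)$ and $ins_{F}(v)$ to share at least $t$ indices by a top-term argument generalising the one for Theorem~\ref{selfann}. An alternative route, avoiding the exterior algebra lift at this final step, is to apply Frankl-style shifts directly to the pairs $\{(A_i,B_i)\}$, engineered so that each shift preserves both the cross-intersection and the $t$-intersection hypotheses while contracting the ground set towards $[a+b]$; after the shifts, $\{A_i\}$ lives inside $\binom{[a+b]}{a}$ and the Ahlswede--Khachatrian bound applies directly. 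Both routes are guided by the extremal configuration in which $\{A_i\}$ is an optimal $AK$ family on $[a+b]$ and $B_i=[a+b]\setminus A_i$, which saturates the conjectured bound.
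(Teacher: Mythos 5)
First, a point of reference: the paper does not prove this statement. It is Conjecture~\ref{conj_t_int}, posed in the concluding section as an open problem, and the authors explicitly note that ``there is still a gap between Theorem~\ref{setint} and Conjecture~\ref{conj_t_int}.'' So there is no proof in the paper to compare against, and a complete argument here would be a new result, not a reconstruction.

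Your setup is sound as far as it goes: with $v(x)$ in general position in $\mathbb{R}^{a+b}$ one indeed gets $\dim(\bar{A}_i\cap\bar{B}_j)=|A_i\cap B_j|$ and $\dim(\bar{A}_i\cap\bar{A}_j)=|A_i\cap A_j|$, the Triangular Criterion gives $\dim(W)=m$, and by Lemma~\ref{corr5} it would suffice to find an $F$ with $\mathcal{H}_F(W)$ $t$-intersecting. But the proposal stops exactly at the step that carries all of the difficulty, and you say so yourself (``this promotion is the main obstacle\ldots a plausible attempt''). Moreover, the specific mechanism you name does not work as stated: for $t\geq 1$ the family $\{A_i\}$ is intersecting, so $u\wedge v=\mathbf{0}$ for all $u,v\in W$, and therefore every ``iterated contraction of $u\wedge v$'' is identically zero and carries no information about $|ins_{F}(u)\cap ins_{F}(v)|$. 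What one would actually have to work with is a condition of the form $(\iota_{\phi}u)\wedge v=\mathbf{0}$ for all $\phi\in\bigwedge^{t-1}V^{*}$, i.e.\ contracting $u$ \emph{before} wedging with $v$; this would require (a) checking that $\dim(\bar{A}_i\cap\bar{A}_j)\geq t$ forces this vanishing on the generators, (b) observing that the condition is bilinear in $(u,v)$ and so passes to the span $W$, and (c) a top-term uniqueness lemma in reverse colex order showing that $|ins_{F}(u)\cap ins_{F}(v)|\leq t-1$ would make some such expression nonzero. None of (a)--(c) appears in your write-up, and together they constitute a $t$-intersecting analogue of Theorem~\ref{selfann} that the paper itself does not possess. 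The fallback route via ``Frankl-style shifts'' is weaker still: standard $(i,j)$-shifts are not known to preserve the cross-intersecting set-pair condition, and shifting does not by itself contract the ground set to $[a+b]$ --- that reduction is essentially the content of Bollob\'as's theorem. In short, the proposal is a plan with the key lemma asserted as plausible but never established; it is not a proof.
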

Unlike the $t$-cross-intersecting constraints in Theorem~\ref{setint}, Conjecture~\ref{conj_t_int} only requires that $\{A_i\}_{i=1}^{m}$ and $\{B_j\}_{j=1}^{m}$ are cross-intersecting. Therefore, there is still a gap between Theorem~\ref{setint} and Conjecture~\ref{conj_t_int}. It is worth noting that in their recent paper \cite{SW2019}, Scott and Wilmer present a proof of Conjecture \ref{conj_t_int} using interior products.

In Theorem~\ref{hemi_bundle_main}, we require that $a_i+b_i=N$ to be fixed, while in Theorem~\ref{weighted_space} there is no such requirement. However, due to an anonymous reviewer, the requirement can not be removed because of the following counterexample from Adam Wagner: Let
\[
\begin{array}{ll}
&A_1=\{0,3\},~A_2=\{3,5\},~A_3=\{3,4\}~\text{and}~A_4=\{0,4,5\};\\
&B_1=\{4,5\},~B_2=\{0,4\},~B_3=\{0,5\}~\text{and}~B_4=\{1,2,3\}.
\end{array}
\]
Then, we have $a_i=b_i=2$ for $1\leq i\leq 3$ and $a_4=b_4=3$. This leads to $\sum_{i=1}^{4}{{a_i+b_i-1\choose a_i-1}}^{-1}= 1.1>1.$

As another natural direction for research about extremal problems, it's also worth considering our theorems from the perspective of stability. Since we have proved the bound of these collections of pairs of subspaces (or sets) in Theorem~\ref{setint} respectively, is the trivial structure:
\begin{equation*}
\{A_i\}_{i=1}^{m} \text{ is a } t\text{-star of } \mathbb{R}^{a+b} \text{ and } B_i=A_i^{\bot}\in \mathbb{R}^{a+b}
\end{equation*}
the only structure that attains these bounds? If not, what is the complete set of other extremal structures? This question might not be easy. To our knowledge, the uniqueness of the extremal structure for Theorem~\ref{threshold_Furedi} has still not been settled, so it might need different methods for solving questions of this kind.

\end{document}